\numberwithin{equation}{section}
\numberwithin{figure}{section}
\theoremstyle{plain}
\newtheorem{thm}{\protect\theoremname}
  \theoremstyle{plain}
  \newtheorem{prop}[thm]{\protect\propositionname}
  \theoremstyle{remark}
  \newtheorem{rem}[thm]{\protect\remarkname}
  \theoremstyle{definition}
  \newtheorem{defn}[thm]{\protect\definitionname}
  \theoremstyle{plain}
  \newtheorem{lem}[thm]{\protect\lemmaname}
  \providecommand{\definitionname}{Definition}
  \providecommand{\lemmaname}{Lemma}
  \providecommand{\propositionname}{Proposition}
  \providecommand{\remarkname}{Remark}
\providecommand{\theoremname}{Theorem}
\author{C\'esar Camacho}
\email{camacho@impa.br}
\address{IMPA, Estrada Dona Castorina 110, Rio de Janeiro, Brasil.}
\author{Rudy Rosas}
\email{rudy.rosas@pucp.pe}
\address{Pontificia Universidad Cat\'olica del Per\'u, Av Universitaria 1801, Lima, Per\'u.}
\address{Instituto de Matem\'atica y Ciencias Afines, Jr. los bi\'ologos 245,  Lima, Per\'u}
\begin{document}

\title{Invariant sets near singularities of holomorphic foliations}

\maketitle

\begin{abstract} Consider a complex one dimensional foliation on a complex surface near a singularity $p$. 
If $\mathcal I$ is a closed invariant set containing  the singularity $p$, then $\mathcal I$  contains either a 
separatrix at $p$ or an invariant real three dimensional manifold singular at $p$.
\end{abstract}

We consider a one-dimensional holomorphic foliation $\mathcal{F}$ on a complex surface $V$, with a singularity 
at $p\in V$. We assume that  $p\in V$ is a normal singularity whose resolution graph is a tree; in particular we admit 
the possibility that $p\in V$ be a regular point of $V$.  It is proved in \cite{Camacho} and \cite{CS} that for any such 
singularity there exists at least one separatrix of the foliation, that is, there is a germ at $p\in V$ of an analytic curve 
invariant by the foliation.
In this paper we study the nature of the closed $\mathcal{F}$-invariant sets near the singularity and their relations with 
the separatrices of $\mathcal{F}$.
When $(p,V)=(0,\mathbb{C}^{2})$ the foliation $\mathcal F$ can be represented by a localholomorphic vector field and 
the singularity is called {\it reduced} if the linear part of the vector field at $\text{\ensuremath{p}}$ has eigenvalues 
$\lambda_{1},\lambda_{2}\in\mathbb{C}$ with $\lambda_{1}\neq0$ and such that 
$\lambda=\frac{\lambda_{2}}{\lambda_{1}}$ is not a rational positive number. 
This last number will be called the eigenvalue of the singularity. The singularity $p$ is hyperbolic if
$\lambda\in\mathbb{C}\backslash\mathbb{R}$,
it is a saddle if $\lambda<0$, it is a node if $\lambda>0$, and it
is a saddle-node if $\lambda=0$. When the singularity of $\mathcal{F}$
at $0\in\mathbb{C}^{2}$ is a node we have a particular kind of invariant
sets. In this case there are suitable local coordinates such that the foliation
near $p\in V$ is given by the holomorphic vector field
$x\frac{\partial}{\partial x}+\lambda y\frac{\partial}{\partial y}$
and we have the multi-valued first integral $yx^{-\lambda}$. Then
the closure of any leaf other than the separatrices is a set of
type $|y|=c|x|^{\lambda}$, $c>0$, which is called a {\it nodal separator}
\cite{MM}. More precisely, we say that a set $S$ is a nodal separator for a node, if in
linearizing coordinates as above we have $S=\{(x,y):|y|=c|x|^{\lambda}\}\cap B$, $c>0$, where in those coordinates $B$ is 
an open ball centered at the singularity. Clearly $S$ is  invariant by the foliation restricted to $B$. In general, we say that a 
subset $p\in S\subset V$ is a {\it nodal separator at p}
if the strict transform of $S$ in the resolution%
\footnote{By a resolution we always means the minimal resolution.%
} of $\mathcal{F}$ is a nodal separator for some node in the resolution.
We recall that a germ $(S,p)$ of analytic irreducible curve at $p$ is called a separatrix at $p$ if it is invariant by $\mathcal{F}$. 
We say that $S_1$ is a {\it representative of the separatrix} $(S,p)$ if 
$S_1=\phi(\mathbb{D})$\footnote{$\mathbb{D}=\{z\in\mathbb{C}:|z|<1\}$} 
for a holomorphic injective map $\phi:\mathbb{D}\rightarrow V$ regular on 
$\mathbb{D}\backslash\{0\}$ and such that the germ of $S_1$ at $p$ coincides with $(S,p)$.

\begin{thm}
\label{principal}
Let $\mathcal I\subset V$  be a closed $ \mathcal{F}$-invariant subset such that $p\in \mathcal I\ne \{p\}$.  Then $\mathcal I$ contains a separatrix or a nodal separator at $p$.
\end{thm}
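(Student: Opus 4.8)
The plan is to reduce the statement, by resolution of singularities, to a local study at reduced singularities together with a propagation argument along the exceptional divisor. First I would pass to the resolution $\pi\colon(\widetilde V,E)\to(V,p)$ of $\mathcal F$, so that $\widetilde{\mathcal F}:=\pi^{*}\mathcal F$ has only reduced singularities and $E=\pi^{-1}(p)=\bigcup_{i}E_{i}$ is a tree of rational curves, and put $\mathcal J:=\overline{\pi^{-1}(\mathcal I\setminus\{p\})}$. Since the closure of an $\widetilde{\mathcal F}$-invariant set is $\widetilde{\mathcal F}$-invariant, $\mathcal J$ is a closed $\widetilde{\mathcal F}$-invariant set with $\pi(\mathcal J)=\mathcal I$ and $\mathcal J\not\subset E$; and since $\mathcal J\setminus E=\pi^{-1}(\mathcal I\setminus\{p\})$ is closed in $\widetilde V\setminus E$, every point of $\mathcal J\cap E$ lies in $\overline{\mathcal J\setminus E}$, and (because $p$ is not isolated in $\mathcal I$, which is the content of the hypothesis $\mathcal I\ne\{p\}$) the set $\mathcal J\cap E$ is nonempty. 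Because the $\pi$-image of an invariant analytic disk meeting $E$ transversely at a smooth point of $E$ and not contained in $E$ is a representative of a separatrix at $p$, and the $\pi$-image of a nodal separator attached to a node of $\widetilde{\mathcal F}$ lying on $E$ is a nodal separator at $p$, it suffices to locate one of these two configurations inside $\mathcal J$. Two quick normalizations: if $\mathcal J$ meets a non-$\widetilde{\mathcal F}$-invariant component of $E$ at a point that is a regular point of $\widetilde{\mathcal F}$ and a smooth point of $E$, then the leaf through that point is an invariant disk transverse to the divisor and contained in $\mathcal J$, so we are done; and since for an invariant component $E_{i}$ the set $E_{i}\setminus\operatorname{Sing}(\widetilde{\mathcal F})$ is a single leaf with closure $E_{i}$, each invariant $E_{i}$ is either contained in $\mathcal J$ or meets it only in $\operatorname{Sing}(\widetilde{\mathcal F})$. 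Thus we may assume $\mathcal J\cap E=\bigl(\bigcup_{i\in A}E_{i}\bigr)\cup F$ with $A$ a set of invariant components and $F$ a finite set of reduced singularities.

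The key local input I would establish is a dichotomy at a reduced singularity $q$ of $\widetilde{\mathcal F}$: if $K$ is the germ at $q$ of a closed $\widetilde{\mathcal F}$-invariant set with $q\in K$ that is not contained in the union of the (formal) separatrices of $\widetilde{\mathcal F}$ at $q$, then $K$ contains a convergent local separatrix of $\widetilde{\mathcal F}$ at $q$, and, when $q$ is not a saddle-node, $K$ contains \emph{every} convergent local separatrix at $q$ or a local nodal separator at $q$. This is obtained by going through the cases recalled in the introduction and in \cite{MM}: for a node, a single leaf other than the two separatrices is dense in, hence has closure equal to, a nodal separator together with $q$; for a saddle or for a complex (non-real eigenvalue) singularity, a leaf clustering at $q$ is forced—because the eigenvalue ratio is irrational—to accumulate on both separatrices; and for a saddle-node one uses the sectorial normal form to show that a closed invariant set clustering at $q$ off the separatrices accumulates on the strong separatrix, which is always convergent.

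Next I would run a propagation along the tree $E$. Pick any $q\in\mathcal J\cap E$; then $\mathcal J$ contains points off $E$ arbitrarily close to $q$. If $q$ is a regular point of $\widetilde{\mathcal F}$ it lies on an invariant component $E_{i}\subset\mathcal J$, and following the nearby off-$E$ leaves of $\mathcal J$ through the foliated tube around $E_{i}$ one reaches a singularity $q'\in E_{i}$ at which $\mathcal J$ contains $q'$ together with leaves off $E$; if $q$ is already singular set $q'=q$. Apply the local dichotomy at $q'$: if it produces a nodal separator we are done; if it produces a separatrix of $\widetilde{\mathcal F}$ at $q'$ transverse to $E$, that curve lies in $\mathcal J$ and is the strict transform of a separatrix at $p$, and we are done; the only remaining possibility is that every separatrix forced into $\mathcal J$ at $q'$ is a local branch of $E$, which puts the corresponding invariant components into $\mathcal J$. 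Using the strengthened form of the dichotomy one checks that in this last case a component not previously in $A$ gets added to it; since $E$ is a finite tree, the iteration must stop, producing a transverse separatrix or a nodal separator inside $\mathcal J$, and hence, by the first paragraph, the desired object inside $\mathcal I$.

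I expect the main obstacle to be this last step: making precise the propagation of off-$E$ accumulation from a regular point of an invariant component to a singularity carrying nearby leaves, and, above all, organizing the induction so that it provably terminates---this is exactly where the hypothesis that the resolution graph is a tree is used. A secondary technical difficulty is the saddle-node case of the local dichotomy, where the weak separatrix may fail to converge and one must argue through the sectorial (Hukuhara--Kimura--Matuda) normal form that a closed invariant set clustering at the saddle-node, off its separatrices, necessarily accumulates on the strong (convergent) separatrix.
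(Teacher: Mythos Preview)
Your reduction to the resolution, the local dichotomy at reduced singularities, and the idea of propagating along the exceptional divisor are all correct and are exactly the dynamical ingredients the paper uses (your dichotomy is the content of Lemmas~\ref{silla} and~\ref{silla nodo}). The gap is in the termination of your induction. You write that ``in this last case a component not previously in $A$ gets added to it; since $E$ is a finite tree, the iteration must stop, producing a transverse separatrix or a nodal separator.'' This is not justified. At a saddle-node corner $q=E_i\cap E_j$ whose strong manifold is the component $E_i$ you are \emph{coming from}, the local dichotomy forces only $E_i$ into $\mathcal J$, so no new component is gained; and at a non-corner saddle-node whose strong manifold lies in $E$, the dichotomy again forces only the $E$-branch, never the transverse (weak) separatrix. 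So the process can stall without producing anything transverse to $E$. The tree hypothesis alone does not rule this out; what does is the \emph{negative definiteness of the intersection form}, which you never invoke.

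This is precisely where the paper's argument diverges from yours. Instead of a bare propagation, the paper first proves Theorem~\ref{separatriz}: on any invariant connected subdivisor there is a non-corner singularity $q$ with $\mathrm{Re}\,\mathrm{CS}(P_{j_0},q)<0$. Such a $q$ cannot be a saddle-node with strong manifold in $E$ (that would give index $0$), so at $q$ the transverse separatrix is necessarily forced by the local dynamics. This is packaged into the notion of \emph{approximation chain} (Theorem~\ref{existencia de cadenas}): every invariant component is the start of a chain of components, moving only from weak to strong at saddle-node corners and avoiding nodal corners, terminating at such a $q$. Proposition~\ref{dynamical} is then exactly your propagation, but directed along this chain so it is guaranteed to end at a ``strong'' transverse separatrix. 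The proof of Theorem~\ref{principal} is finished by a complementary argument (Theorem~\ref{principaDetallado}): saturating small transversals to all the strong separatrices already fills a neighborhood of the divisor away from the nodal balls and dicritical tubes, so if $\mathcal I$ misses every strong separatrix it is trapped in those pieces and hence consists of weak separatrices and nodal separators. To repair your approach you must either import Theorem~\ref{separatriz} to orient the propagation, or reproduce an equivalent index computation; the finiteness of the tree is necessary but not sufficient.
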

In particular, if $\mathcal{F}$ does not contain nodes in its resolution
then $\mathcal I$ contains a separatrix. In fact, in this case
we obtain a slightly stronger result. A {\it dicritical component} of the
resolution of $\mathcal{F}$ is an irreducible component of the
divisor everywhere transverse to the reduced lifted foliation.
We say that the foliation $\mathcal F$
is { \it non-dicritical} whenever all the components of the resolution divisor are
not dicritical.
\begin{thm}
\label{principal2} Suppose that $\mathcal{F}$ is non-dicritical and contains
no nodes in its resolution at $p$. Let  $S_{1},...,S_{r}\subset V$, $r\ge1$, be representatives of the
separatrices of $\mathcal{F}$ at $p$ and for each $j=1,...,r$  take a complex
disc $\Sigma_{j}\subset V$ passing transversely through a  point in
$S_{j}\backslash\{p\}$.  Then the set
\[
Sat_{\mathcal{F}}(\bigcup_{j=1}^{r}\Sigma_{j})
\]
is a punctured neighborhood of the singularity $p\in V$.
\end{thm}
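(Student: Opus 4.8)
The plan is to derive Theorem \ref{principal2} from Theorem \ref{principal}, the only extra ingredient being the elementary fact that the saturation of a transversal by a regular foliation is an open set. Write $\Omega:=Sat_{\mathcal{F}}(\bigcup_{j=1}^{r}\Sigma_{j})$. We may assume $V$ is a ball on which $p$ is the only singular point of $\mathcal{F}$, so that $\mathcal{F}$ is a regular holomorphic foliation on $V\setminus\{p\}$; since each $\Sigma_j$ is transverse to $\mathcal{F}$ at all of its points it avoids $p$, and hence $p\notin\Omega$. First I would record two facts. (i) $\Omega$ is open in $V$: the discs $\Sigma_j$ have dimension complementary to the leaves, so $\Omega$ equals the saturation, under the regular foliation $\mathcal{F}|_{V\setminus\{p\}}$, of an open set (a union of foliated product neighborhoods of the $\Sigma_j$), and the saturation of an open set by a regular foliation is open. (ii) For each $j$, $S_j\setminus\{p\}\subset\Omega$: the set $S_j\setminus\{p\}$ is a connected $\mathcal{F}$-invariant regular curve (the tangency of $S_j$ to $\mathcal{F}$ near $p$ extends to all of $S_j\setminus\{p\}$ by the identity principle), so it lies in the single leaf $L_j$ through the point $x_j\in S_j\cap\Sigma_j$, and $x_j\in\Omega$ forces $L_j\subset\Omega$.

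The main step is then to argue by contradiction. Suppose $\Omega$ is not a punctured neighborhood of $p$. Since $\Omega$ is open and $p\notin\Omega$, this forces $\Omega$ to contain no punctured ball centered at $p$, that is, the closed set $\mathcal{I}:=V\setminus\Omega$ meets every punctured ball around $p$; in particular $p\in\mathcal{I}$ and $\mathcal{I}\neq\{p\}$. Being the union of $\{p\}$ with all leaves of $\mathcal{F}|_{V\setminus\{p\}}$ that do not meet $\bigcup_{j}\Sigma_j$, the set $\mathcal{I}$ is a closed $\mathcal{F}$-invariant subset of $V$. Theorem \ref{principal} applies and yields that $\mathcal{I}$ contains a separatrix or a nodal separator at $p$. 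But $\mathcal{F}$ has no node in its resolution, so by the very definition of a nodal separator at $p$ there is none; hence $\mathcal{I}$ contains a representative $S$ of some separatrix $(S,p)$ of $\mathcal{F}$ at $p$.

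To finish, I would use that $\mathcal{F}$ is non-dicritical, hence has only finitely many separatrices at $p$, of which $S_1,\dots,S_r$ are representatives of all of them; thus $(S,p)=(S_k,p)$ for some $k$. Then a punctured disc $\delta\subset S$ around $p$ satisfies $\delta\subset S_k\setminus\{p\}$, so on one hand $\delta\subset S\subset\mathcal{I}=V\setminus\Omega$, and on the other hand, by (ii), $\delta\subset S_k\setminus\{p\}\subset\Omega$; this contradiction proves that $\Omega$ does contain a punctured ball $B\setminus\{p\}$. Since $\Omega$ is open and $p\notin\Omega$, the set $\Omega\cup\{p\}\supset B$ is a neighborhood of $p$ and $\Omega=(\Omega\cup\{p\})\setminus\{p\}$, so $\Omega$ is a punctured neighborhood of $p$, as required.

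I expect no serious obstacle: the argument is really a repackaging of Theorem \ref{principal}. The one point that needs genuine care is the openness of $\Omega$ in step (i), where one must use that the $\Sigma_j$ are honest embedded transversals of dimension complementary to the leaves and check that the singular point does not interfere, which it cannot since $p\notin\Omega$ and $\mathcal{F}$ is regular off $p$. The two remaining inputs, that ``no node in the resolution'' is literally ``no nodal separator at $p$'' and that non-dicriticalness makes the list $S_1,\dots,S_r$ finite and complete, are immediate from the definitions and classical, respectively.
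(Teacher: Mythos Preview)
Your argument is correct. The only minor quibble is the phrase ``assume $V$ is a ball'': since $p$ may be a normal singular point of the surface, $V$ need not be smooth, but of course you only need to shrink $V$ to a neighborhood where $p$ is the unique singularity of $\mathcal{F}$, and this is harmless. The identity-principle step showing $S_j\setminus\{p\}$ is tangent to $\mathcal{F}$ is fine: pulling back a local defining $1$-form by the parametrization $\phi$ gives a holomorphic $1$-form on $\mathbb{D}\setminus\{0\}$ vanishing near $0$.

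Your route, however, is genuinely different from the paper's. The paper does \emph{not} deduce Theorem~\ref{principal2} from Theorem~\ref{principal}; instead it proves a more refined statement (Theorem~\ref{principaDetallado}) directly, by working in the resolution and using approximation chains together with the local dynamics near reduced singularities (Lemmas~\ref{silla} and~\ref{silla nodo}), and then reads off Theorem~\ref{principal2} as the special case with no nodes and no dicritical components. In the paper's logical order, Theorem~\ref{principal} is proved \emph{after} Theorem~\ref{principaDetallado} and \emph{uses} it, so your deduction is not circular; it simply closes the loop by showing that once Theorem~\ref{principal} is available, Theorem~\ref{principal2} follows formally by passing to the complement $\mathcal{I}=V\setminus\Omega$. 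What the paper's approach buys is the finer Theorem~\ref{principaDetallado}, which describes exactly which transversals suffice (only those to the strong separatrices) and which is needed anyway to prove Theorem~\ref{principal}; what your approach buys is the clean observation that Theorems~\ref{principal} and~\ref{principal2} are essentially equivalent in the node-free non-dicritical case.
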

This is a generalization of the same statement  in \cite{MM2} for a generalized curve
$\mathcal{F}$ (as defined in \cite{CSL}). In Theorem \ref{principaDetallado}
we give a more detailed version of this theorem.
\begin{rem}Theorems \ref{principal} and \ref{principal2} are of local nature. If  $U$ is a neighborhood 
of $p$ in $V$, we can apply these theorems to $U$ and $\mathcal{F}|_U$ instead of $V$ and 
$\mathcal{F}$. Thus, for example in Theorem \ref{principal2}, in order to obtain a punctured neighborhood 
of $p$ it suffices to saturate the sets $\Sigma_j$ by the leaves of $\mathcal{F}|_U$. Of course in this case 
we must take the $\Sigma_j$ in $U$.
\end{rem}
One of the ingredients in the proof of these results is Theorem \ref{separatriz},
which we use to relate the distribution of the separatrices and the
saddle-node corners in the resolution of the foliation. A particular
version of this theorem is proved in \cite{Camacho} and it is essentially
the proof of the existence of separatrices. As an application of Theorem \ref{separatriz} we give another proof
of the so called Strong Separatrix Theorem proved in \cite{ORV}.

\section{weighted graph of a resolution and the existence of separatrices}

A \emph{weighted graph }is a connected graph $\Gamma$ with vertices
$\{v_{1},...,v_{m}\}$ and at each $v_{j}$ a nonzero number $w(v_{j})$,
called the weight of $v_{j}$. The intersection matrix $(\alpha_{ij})$
of $\Gamma$ is defined by $\alpha_{jj}=w(v_{j})$ and $\alpha_{ij}=$number
of edges joining $v_{i}$ and $v_{j}$ if $i\neq j$.

Let M be a complex regular surface and $D=\bigcup_{j\in J}P_{j}\subset M$
a finite union of compact regular Riemann surfaces with normal crossings.
Each intersection $P_i\cap P_j\ne\emptyset$, $i\ne j$, is called a {\it corner}.
The weighted graph $\Gamma$ associated to $D$ is composed by the
vertices $\{P_{j}\}_{j\in J}$, with $w(P_{j})=P_{j}.P_{j}$ and such that there is
an edge between $P_{i}$ and $P_{j}$ for each point in the intersection
$P_{i}\cap P_{j}$.
In this case the intersection matrix is given by $\alpha_{jj}=P_{j}.P_{j}$
and $\alpha_{ij}=\mbox{card}(P_{i}\cap P_{j})$ if $i\neq j$.

Let $\mathcal {\tilde F}$ be a holomorphic foliation with reduced singularities
defined on a neighborhood of D in M and such that D is invariant by
$\mathcal {\tilde F}$.

\begin{thm}
\label{separatriz}Suppose that $\Gamma$ is a tree with negative
definite intersection matrix. Then there exists a singular point
$q\in P_{j_{0}},j_{0}\in J$,
outside the set of corners,
such that $\emph{Re {CS}}(P_{j_{0}},q)<0$.
\end{thm}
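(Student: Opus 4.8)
The plan is to argue by contradiction. Suppose that for every $j\in J$ and every singular point $q$ of $\tilde{\mathcal F}$ on $P_j$ outside the corners one has $\mathrm{Re}\,CS(P_j,q)\ge 0$; I will contradict the negative definiteness of the intersection matrix. The basic input is the Camacho--Sad index theorem: since each $P_j$ is a compact Riemann surface invariant by $\tilde{\mathcal F}$, the sum of the indices $CS(P_j,q)$ over all singularities $q$ of $\tilde{\mathcal F}$ on $P_j$ (corners included) equals $P_j\cdot P_j=\alpha_{jj}$. Write $a_{ij}:=CS(P_i,q)$ for the corner $q=P_i\cap P_j$, and let $N_j$ be the sum of the indices at the non-corner singularities on $P_j$; then $\sum_{i\sim j}a_{ij}+N_j=\alpha_{jj}$ for every $j$, where $i\sim j$ means $P_i\cap P_j\ne\emptyset$. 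The contradiction hypothesis says $\mathrm{Re}\,N_j\ge 0$, hence $\mathrm{Re}\sum_{i\sim j}a_{ij}\le\alpha_{jj}$ for all $j$. Moreover, at a corner $q=P_i\cap P_j$ the two local branches of $D$ are the two separatrices of the reduced singularity at $q$, so by the standard local computation of the Camacho--Sad index one has $a_{ij}a_{ji}=1$ when $q$ is non-degenerate, and $a_{ij}=0$ or $a_{ji}=0$ (the index of the strong separatrix being $0$) when $q$ is a saddle-node.

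This reduces the theorem to a purely combinatorial statement, which I would isolate as a lemma: there is no weighted tree $\Gamma$ on vertices $\{1,\dots,m\}$, with \emph{real} weights $w_i$ and negative definite intersection matrix $A=(\alpha_{ij})$ (so $\alpha_{ii}=w_i$ and, $\Gamma$ being a tree, $\alpha_{ij}\in\{0,1\}$), together with complex numbers $a_{ij}$ attached to the ordered adjacent pairs with $a_{ij}a_{ji}\in\{0,1\}$, such that $\mathrm{Re}\sum_{i\sim j}a_{ij}\le w_j$ for every $j$. I would prove this by induction on $m$. If $m=1$ there are no edges, so the hypothesis reads $0\le w_1$, which is false because $w_1=\alpha_{11}<0$ is a diagonal entry of a negative definite matrix.

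For the inductive step, choose a leaf $\ell$ of $\Gamma$ with unique neighbour $k$. From the hypothesis at $\ell$ we get $\mathrm{Re}\,a_{\ell k}\le w_\ell<0$, so $a_{\ell k}\ne 0$; if $q=P_\ell\cap P_k$ is a saddle-node this already forces $a_{k\ell}=0$, and in every case one obtains $\mathrm{Re}\,a_{k\ell}\ge 1/w_\ell$ --- indeed, when $q$ is non-degenerate, $a_{k\ell}=1/a_{\ell k}$ and the identity $\mathrm{Re}(a)\,\mathrm{Re}(1/a)=\cos^2(\arg a)\le 1$ give $\mathrm{Re}\,a_{k\ell}\ge 1/\mathrm{Re}\,a_{\ell k}\ge 1/w_\ell$. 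Now delete the vertex $\ell$: the resulting graph $\Gamma'$ is a tree on $m-1$ vertices; keep all weights except replace $w_k$ by $w_k':=w_k-1/w_\ell$, and keep the $a_{ij}$ on the surviving edges. Because $\ell$ is a leaf, the column of $A$ indexed by $\ell$ is the standard basis vector $e_k$, so the intersection matrix of $\Gamma'$ with weights $(w_j')_j$ is exactly the Schur complement $B-w_\ell^{-1}e_ke_k^{\mathsf T}$ of the $1\times1$ block $[w_\ell]$ in $A$; since $A$ is negative definite and $w_\ell<0$, this Schur complement is negative definite, and in particular $w_k'<0$. The inequalities $\mathrm{Re}\sum_{i\sim j}a_{ij}\le w_j$ are unchanged for $j\ne k$, while for $j=k$ one has $\mathrm{Re}\sum_{i\sim_{\Gamma'}k}a_{ki}=\mathrm{Re}\sum_{i\sim_{\Gamma}k}a_{ki}-\mathrm{Re}\,a_{k\ell}\le w_k-1/w_\ell=w_k'$. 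Thus $\Gamma'$ satisfies all hypotheses of the lemma, contradicting the inductive hypothesis. Finally, Theorem \ref{separatriz} follows: were its conclusion false, the data $\bigl(\Gamma,\{a_{ij}\},\{w_j=P_j\cdot P_j\}\bigr)$ furnished by the resolution would satisfy the hypotheses of the lemma, which is impossible.

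I expect the main difficulty to be formulating the combinatorial lemma with the correct generality. The self-intersection numbers $P_j\cdot P_j$ are integers, but the leaf-deletion step produces the non-integer weight $w_k-1/w_\ell$ through the Schur complement, so one must allow arbitrary negative real weights from the outset for the induction to close; the other delicate point is checking that $\mathrm{Re}\,a_{k\ell}\ge 1/w_\ell$ holds uniformly, including at saddle-node corners where the reciprocity $a_{ij}a_{ji}=1$ degenerates to $a_{ij}a_{ji}=0$. Everything else is the Camacho--Sad index theorem, the reciprocity of corner indices for reduced singularities, and the scalar inequality $\mathrm{Re}(a)\,\mathrm{Re}(1/a)\le 1$.
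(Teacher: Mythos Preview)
Your proof is correct and follows essentially the same route as the paper's: both argue by contradiction, apply the Camacho--Sad index formula at each component to get $\mathrm{Re}\sum_{i\sim j}a_{ij}\le w_j$, use the corner reciprocity $a_{ij}a_{ji}\in\{0,1\}$ together with $\mathrm{Re}(1/a)\ge 1/\mathrm{Re}(a)$ for $\mathrm{Re}(a)<0$, and then run a tree recursion that replaces $w_k$ by $w_k-1/w_\ell$ at each step. The only cosmetic difference is packaging: the paper invokes a separately stated proposition producing a negative function $h$ with $h(v)=w(v)-\sum 1/h(v_{i_j})$ and processes the rooted tree level by level, whereas you delete one leaf at a time and justify the same recursion via the Schur complement---the two are equivalent (your $w_k'$ is exactly one step of the paper's $h$-recursion). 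One small wording slip: the column of $A$ indexed by $\ell$ is not $e_k$ but has $w_\ell$ in position $\ell$ and $1$ in position $k$; your Schur complement formula $B-w_\ell^{-1}e_ke_k^{\mathsf T}$ is nonetheless correct.
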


The complex number $\mbox{CS}(P_{j_{0}},q)$ is the Camacho-Sad's index of $q$ relative to $P_{j_{0}}$.
This theorem applies to the case where $\mathcal{F}$
is a holomorphic foliation defined on $V$ with a singularity at $p\in V$. In this
case $\pi:M\rightarrow V$ is the resolution of $\mathcal{F}$ at $p\in V$. Then
$M$ is a regular complex surface and $\pi^{-1}(p)=\bigcup_{j\in J}P_{j}$ is a
finite union of compact Riemann surfaces $P_{j}$ with normal crossings and
$\mathcal{\tilde F}$ is the pull back foliation $\pi^{*}{\mathcal{F}}$. It is well known
(\cite{Du Val} , \cite{Laufer}, \cite{Mumford}) that the intersection matrix of the
associated divisor is negative definite.
Thus the above theorem generalizes the following theorem proved in \cite{Camacho},
\begin{thm}
\label{camacho}Suppose $V$ is a complex normal irreducible surface
such that its resolution graph at $p$ is a tree. If $\pi^{-1}(p)=\bigcup_{j\in J}P_{j}$
contains no dicritical components, then there is a singular point
$q\in P_{j_{0}},j_{0}\in J$, not a corner, such that
$\emph{CS}(P_{j_{0}},q)\notin\mathbb{R}_{\geq0}$.
\end{thm}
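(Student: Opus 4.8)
The plan is to argue by contradiction, combining the Camacho--Sad index theorem on each $P_j$ with the negative definiteness of the intersection matrix, the behaviour of the Camacho--Sad index at a reduced corner, and --- crucially --- the absence of cycles in the tree $\Gamma$. Assume that $\mathrm{Re}\,\mathrm{CS}(P_j,q)\ge 0$ for every $j\in J$ and every singular point $q$ of $\tilde{\mathcal F}$ on $P_j$ outside the set of corners. First note that every corner $q=P_i\cap P_j$ is a singular point of $\tilde{\mathcal F}$, since otherwise the local leaf through $q$ would be a smooth invariant curve containing the two transverse branches $P_i$ and $P_j$; as $\tilde{\mathcal F}$ has reduced singularities and a reduced singularity has at most two separatrices, the germs of $P_i$ and $P_j$ at $q$ are exactly the two separatrices of $q$. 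Applying the Camacho--Sad index theorem to the compact invariant curve $P_j$ gives $\sum_q \mathrm{CS}(P_j,q)=P_j\cdot P_j=\alpha_{jj}$, the sum over all singular points $q\in P_j$; splitting into corners and non-corners and taking real parts (using $\alpha_{jj}\in\mathbb{R}$ and the hypothesis on the non-corner terms) gives, for every $j$,
\[
\sum_{i\,:\,P_i\cap P_j\ne\emptyset} c_{ij}\ \le\ \alpha_{jj},\qquad c_{ij}:=\mathrm{Re}\,\mathrm{CS}\bigl(P_j,\,P_i\cap P_j\bigr).
\]

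The second step is a lemma about a single corner $q=P_i\cap P_j$: either $c_{ij}\ge 0$ and $c_{ji}\ge 0$, or else $c_{ij}\le 0$, $c_{ji}\le 0$ and $c_{ij}c_{ji}\le 1$. If $q$ is not a saddle-node then $\mathrm{CS}(P_i,q)\,\mathrm{CS}(P_j,q)=1$, so writing $\mu=\mathrm{CS}(P_j,q)$ we have $c_{ij}c_{ji}=\mathrm{Re}(\mu)\,\mathrm{Re}(1/\mu)=(\mathrm{Re}\,\mu)^2/|\mu|^2\in[0,1]$, which forces $c_{ij}$ and $c_{ji}$ to have the same sign (or to vanish) and gives the bound. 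If $q$ is a saddle-node, one of $P_i,P_j$ is its strong separatrix, along which the Camacho--Sad index is $0$; thus one of $c_{ij},c_{ji}$ vanishes and the dichotomy is immediate.

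Now let $x_j>0$ be positive numbers to be chosen. Multiplying the displayed inequality by $x_j^2$, summing over $j$ and regrouping by edges of $\Gamma$ gives $\sum_{ij}(c_{ij}x_j^2+c_{ji}x_i^2)\le\sum_j\alpha_{jj}x_j^2$, while negative definiteness (with $\alpha_{ij}=1$ on edges) gives $\sum_j\alpha_{jj}x_j^2+2\sum_{ij}x_ix_j<0$; hence
\[
\sum_{ij\ \text{edge of }\Gamma}\bigl(c_{ij}x_j^2+c_{ji}x_i^2+2x_ix_j\bigr)<0 .
\]
It therefore suffices to choose the $x_j>0$ so that each summand is $\ge 0$. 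For an edge with $c_{ij},c_{ji}\ge 0$ the summand is $\ge 2x_ix_j>0$, with no constraint on the weights. For an edge with $c_{ij},c_{ji}\le 0$, set $a=-c_{ij}\ge 0$, $b=-c_{ji}\ge 0$ (so $ab\le 1$); then the summand is $\ge 0$ if and only if $bt^2-2t+a\le 0$ with $t=x_i/x_j$, and since the discriminant $4-4ab$ is nonnegative this holds on a nonempty interval contained in $(0,\infty)$. Choosing one value $t_e$ in each such interval, we must find $x_j>0$ with $x_i/x_j=t_e$ on the set $F$ of constrained edges; writing $x_j=e^{\phi(j)}$ this prescribes $\phi(i)-\phi(j)$ for $ij\in F$. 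Since $\Gamma$ is a tree, $F$ is a forest, so such a $\phi$ exists: on each tree component of $F$ fix a vertex with $\phi=0$ and propagate the prescribed differences along the unique paths, acyclicity guaranteeing consistency, and set $\phi=0$ on the remaining vertices. With these weights every summand above is $\ge 0$, contradicting the strict inequality. Hence some non-corner singular point $q\in P_{j_0}$ satisfies $\mathrm{Re}\,\mathrm{CS}(P_{j_0},q)<0$.

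The main obstacle should be the lemma of the second step, and within it the fact that the Camacho--Sad index of a saddle-node along its strong separatrix vanishes: this is exactly what keeps a saddle-node corner from obstructing the construction of the weights. The second point to handle with care is that it is precisely the tree (acyclicity) hypothesis that makes the finitely many interval constraints on the ratios $x_i/x_j$ simultaneously satisfiable --- on a graph with a cycle one could run into incompatible conditions. The remaining ingredients, the index theorem and a one-variable quadratic estimate, are routine.
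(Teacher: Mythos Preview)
Your proof is correct, and in fact you prove the stronger conclusion $\mathrm{Re}\,\mathrm{CS}(P_{j_0},q)<0$, which is exactly Theorem~\ref{separatriz} in the paper; the stated Theorem~\ref{camacho} then follows immediately. The paper, however, obtains this via a different argument. It roots the tree at an arbitrary component and invokes Proposition~\ref{arbol} to get a negative real function $h$ on the vertices satisfying $h(v)=w(v)-\sum 1/h(v_{i_j})$; then, under the contradiction hypothesis, it propagates the inequality $\mathrm{Re}\,\mathrm{CS}\le h$ from the leaves towards the root, using at each corner the estimate $\mathrm{Re}(1/z)\ge 1/\mathrm{Re}(z)$ for $\mathrm{Re}\,z<0$ (and $\mathrm{CS}=0$ along the strong branch of a saddle-node), until the Camacho--Sad formula at the root is violated. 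Your route is a global quadratic-form argument instead: you weight the Camacho--Sad inequalities by $x_j^2$, sum, and play the result against negative definiteness of the intersection matrix, reducing the problem to making each edge contribution $c_{ij}x_j^2+c_{ji}x_i^2+2x_ix_j$ nonnegative; the dichotomy $c_{ij}c_{ji}\in[0,1]$ with equal signs turns this into a ratio constraint on each ``bad'' edge, and acyclicity lets you satisfy all constraints simultaneously. Both proofs use the same three ingredients (index formula, reciprocal behaviour at reduced corners, tree $+$ negative definiteness), but the paper's is a rooted induction mediated by the auxiliary function $h$, while yours treats all vertices symmetrically and confronts negative definiteness directly. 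Your version makes the role of acyclicity perhaps more transparent (it is exactly what lets the edge constraints be solved), while the paper's version yields the concrete function $h$ that is reused elsewhere.
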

This implies the existence of
separatrices for a holomorphic foliation defined on $V$, singular
at $p\in V$, whenever $V$ is a normal irreducible surface whose
resolution graph at $p$ is a tree.
As in \cite{Camacho}, the key for the proof of Theorem 4 is the following proposition
(\cite{Camacho}, Proposition 2.1).
First we fix any vertex $m$ of $\Gamma$ and introduce an ordering
of $\Gamma$ such that:
\begin{enumerate}
\item At any vertex there is at most one edge getting out
\item $m$ is a maximal element i.e. no edge gets out of $m$.
\end{enumerate}
Clearly $m$ is the sole maximal element. Given any vertex $v$ we
define $n(v)$ as the number of positive edges between $v$ and $m$.
The size of $\Gamma$ is $s=\mbox{max}_{v}n(v)$ and $v$ is at level
$l$ if $l=s-n(v)$.
\begin{prop}
\label{arbol}Let $\Gamma$ be an ordered weighted tree with negative
definite intersection matrix. Then there is a real negative map $h$
defined on the set of vertices of $\Gamma$ such that
\begin{enumerate}
\item If $v$ is a minimal element of $\Gamma$, then $h(v)=w(v)$.
\item If $v$ is not minimal and $\{v_{i_{1}},...,v_{i_{r}}\}$ is the set
of its immediate predecessors, then
\[
h(v)=w(v)-\sum_{j=1}^{r}\frac{1}{h(v_{i_{j}})}.
\]

\end{enumerate}
\end{prop}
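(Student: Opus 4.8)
The plan is to construct the map $h$ by induction on the level of a vertex, exploiting the fact that passing to lower levels corresponds to restricting the intersection matrix to a principal submatrix, which remains negative definite. I would first record the elementary linear-algebra fact that a symmetric matrix $A$ is negative definite if and only if $A$ can be written via "Schur complement" reduction: writing $A=\begin{pmatrix}a & b^{T}\\ b & C\end{pmatrix}$ with $C$ negative definite, $A$ is negative definite iff the scalar $a-b^{T}C^{-1}b<0$. In the graph setting the off-diagonal block $b$ has a very rigid form because $\Gamma$ is a tree with the chosen ordering: each vertex has at most one edge "getting out", so when we peel off the maximal chains the coupling of a vertex $v$ to the already-processed part involves only its immediate predecessors $v_{i_1},\dots,v_{i_r}$, and each of those is coupled to nothing else among the processed vertices except $v$ (again because $\Gamma$ is a tree — there are no cycles, so distinct predecessors lie in disjoint branches). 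This disjointness is what makes the correction term split as a plain sum $\sum_j 1/h(v_{i_j})$ rather than a genuine quadratic form.

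Concretely, I would argue as follows. Let the vertices at level $0$ be the minimal elements; set $h(v)=w(v)$ for these, which is negative because the diagonal entries of a negative definite matrix are negative. Now proceed upward: suppose $h$ has been defined and is negative on all vertices of level $<\ell$, and let $v$ be a vertex of level $\ell$ with immediate predecessors $v_{i_1},\dots,v_{i_r}$ (all of level $<\ell$). Consider the principal submatrix $A_v$ of the intersection matrix indexed by $v$ together with all vertices $\le v$ in the ordering; this is negative definite as a principal submatrix of a negative definite matrix. The branches hanging below $v_{i_1},\dots,v_{i_r}$ are pairwise disjoint subtrees, so $A_v$ has block form in which the "below" part is block-diagonal over these branches; performing the Schur-complement reduction of $A_v$ with respect to the block of everything-strictly-below-$v$ kills each branch independently, and by the induction hypothesis each branch reduction contributes exactly $-1/h(v_{i_j})$ to the $v$-diagonal entry (this is the content of the recursion: the reduced diagonal of the branch through $v_{i_j}$, seen from its top vertex, is $h(v_{i_j})$, and the single edge from $v$ to $v_{i_j}$ then contributes $-(1)\cdot h(v_{i_j})^{-1}\cdot(1)$). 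Hence the fully reduced $(v,v)$-entry is $w(v)-\sum_{j=1}^{r} 1/h(v_{i_j})$, and negative definiteness of $A_v$ forces this scalar to be negative. This simultaneously proves that the recursion is well-defined (no division by zero, since each $h(v_{i_j})\ne 0$) and that $h(v)<0$.

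I would then remark that one must check the bookkeeping of the reduction matches the recursion verbatim: that "Schur-complementing out one branch at a time" commutes, which holds precisely because the branches are decoupled, and that the edge multiplicities $\alpha_{v,v_{i_j}}$ equal $1$ — or if the paper allows multiple edges, the term becomes $\alpha_{v,v_{i_j}}^{2}/h(v_{i_j})$; here the stated recursion has coefficient $1$, consistent with a resolution divisor in which any two components meet transversely in at most finitely many reduced points, but since each edge is counted separately in $\Gamma$, a double intersection is recorded as two edges and hence two predecessors, each contributing $1/h(\cdot)$. The main obstacle, and the step that deserves the most care, is precisely this identification of the iterated Schur complement of the tree's intersection matrix with the stated one-step-per-predecessor recursion: one has to set up the ordering-induced block structure cleanly, verify the subtrees below distinct immediate predecessors are disjoint (hence the relevant submatrix is block diagonal), and track that the Schur complement of a block-diagonal matrix is the sum of the individual Schur complements. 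Once that structural lemma is in place, negative definiteness of principal submatrices does the rest essentially for free.
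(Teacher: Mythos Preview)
The paper does not give its own proof of this proposition; it is quoted verbatim from \cite{Camacho} (Proposition~2.1 there) and used as a black box in the proof of Theorem~\ref{separatriz}. Your Schur-complement argument is correct and is the standard way to prove such a statement.

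Two minor remarks. First, the induction hypothesis you actually use is not merely ``$h(u)<0$ for all $u$ of level $<\ell$'' but the stronger statement that $h(u)$ equals the Schur complement of the submatrix indexed by $\{u':u'\le u\}$ after eliminating everything strictly below $u$; this is what justifies your claim that the branch through $v_{i_j}$ contributes exactly $-1/h(v_{i_j})$. Equivalently one may note that $h(v)=\det A_{\le v}/\det A_{<v}$, where $A_{\le v}$ (resp.\ $A_{<v}$) denotes the principal submatrix indexed by the vertices $\le v$ (resp.\ $<v$); negativity then follows immediately from the sign of the principal minors of a negative definite matrix. Second, your aside about multi-edges is unnecessary: $\Gamma$ is assumed to be a tree, so there is at most one edge between any two vertices and $\alpha_{ij}\in\{0,1\}$ for $i\ne j$.
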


\subsection{Proof of Theorem \ref{separatriz}}

Proceeding by contradiction assume that any singular point $z$, not
a corner, is such that $\mbox{Re}(\mbox{CS}(D,z))\ge0$. Take any
component $P$ of $D$ and consider the order of $\Gamma$ such that
$P$ is a maximal element. Suppose $Q_{r+1}$ is a vertex at level
one, $Q_{1},...,Q_{r}$ all its predecessors (at level zero) and $Q_{r+2}$
the only successor of $Q_{r+1}$ at level two. Write $q_{j}=Q_{r+1}\cap Q_{j}$,
$j=1,...,r+2$, $j\neq r+1$. By the Camacho-Sad's formula and Proposition
\ref{arbol}
\[
\mbox{ReCS}(Q_{j},q_{j})\le w(Q_{j})=h(Q_{j})<0\mbox{ for }j=1,...,r.
\]

Since the singular points are reduced,

\[
\mbox{ReCS}(Q_{r+1},q_{j})=\mbox{Re}(\frac{1}{\mbox{CS}(Q_{j},q_{j})})\mbox{ or }\mbox{ReCS}(Q_{r+1},q_{j})=0>\mbox{Re}(\frac{1}{\mbox{CS}(Q_{j},q_{j})}).
\]

In any case
\[
\mbox{Re}\mbox{CS}(Q_{r+1},q_{j})\ge\mbox{Re}(\frac{1}{\mbox{CS}(Q_{j},q_{j})})\ge\frac{1}{\mbox{ReCS}(Q_{j},q_{j})}\ge\frac{1}{h(Q_{j})}.
\]

Again by Camacho-Sad's formula
\[
\mbox{ReCS}(Q_{r+1},q_{r+2})+\sum_{j=1}^{r}\mbox{ReCS}(Q_{r+1},q_{j})\le w(Q_{r+1}),
\]

\[
\mbox{ReCS}(Q_{r+1},q_{r+2})\le w(Q_{r+1})-\sum_{j=1}^{r}\frac{1}{h(Q_{j})}=h(Q_{r+1})<0,
\]
so
\[
\mbox{ReCS}(Q_{r+2},q_{r+2})\ge\frac{1}{\mbox{ReCS}(Q_{r+1},q_{r+2})}\ge\frac{1}{h(Q_{r+1})}.
\]
Proceeding by induction, if $P_{1},...,P_{l}$ are all the immediate
predecessors of $P$ and $p_{j}=P\cap P_{j}$ for $j=1,...,l$, then
\[
\sum_{j=1}^{l}\mbox{ReCS}(P,p_{j})\ge\sum_{j=1}^{l}\frac{1}{h(P_{j})}=w(P)-h(P)>w(P)
\]
which is a contradiction.

\subsection{Strong Separatrix Theorem}

If $V$ is as in Theorem \ref{camacho}, we can apply Theorem
$\ref{separatriz}$ to any connected
non-dicritical subgraph $\Gamma'$ of the resolution graph $\Gamma$
of $\mathcal{F}$. In this case, if $\{P_{j}\}_{j\in J'}$, $J'\subset J$
are the vertices of $\Gamma'$, we find a singular point $q\in P_{j_{0}},j_{0}\in J',q\notin P_{j},j\in J'\backslash\{j_{0}\}$
such that $\mbox{ReCS}(P_{j_{0}},q)<0$. Of course it might happen
that $q\in P_{j}$ with $j\in J\backslash J'$. Observe that the foliation
may be dicritical, that is, some components $P_{j}$, $j\in J\backslash J'$
may be non-invariant. As an application of these considerations we
give another proof of the following theorem proved in \cite{ORV}.
\begin{thm}
\textbf{\emph{(Strong Separatrix Theorem)}}\label{strongCS}
The number of separatrices of an isolated singularity of a planar
holomorphic vector field is greater than the number of nodal corner
points in its resolution.
\end{thm}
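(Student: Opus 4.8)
The plan is to deduce the inequality from Theorem~\ref{separatriz} applied to a suitable subgraph of the resolution. Let $\pi\colon M\to V$ be the (minimal) resolution of the foliation at the isolated singularity $p\in V=\mathbb C^2$, with exceptional divisor $D=\bigcup_{j\in J}P_j$ and reduced lifted foliation $\mathcal{\tilde F}$. Each separatrix of the original singularity, dicritical or not, corresponds after resolution either to a smooth invariant curve meeting $D$ at a regular point of a non-dicritical component away from the corners, or to a local branch passing through a dicritical component; since the resolution is minimal, the dicritical components through which separatrices escape account for part of the picture, but the heart of the matter is counting over the non-dicritical part $\Gamma'$ of $\Gamma$, whose intersection matrix is negative definite (being a principal submatrix of the negative definite intersection matrix of $\Gamma$, $\Gamma'$ is still a tree because $\Gamma$ is a tree). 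First I would set up a careful bookkeeping: let $c$ be the number of nodal corners of $\mathcal{\tilde F}$ (all of which lie in the non-dicritical part, since a dicritical component carries no singularities of $\mathcal{\tilde F}$ along itself that are corners in the relevant sense), and let $s$ be the number of separatrices; I want to show $s>c$.

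The key idea is an inductive/peeling argument on $\Gamma'$ driven by Theorem~\ref{separatriz}. By that theorem there is a non-corner singular point $q_0\in P_{j_0}$ with $\mathrm{Re}\,\mathrm{CS}(P_{j_0},q_0)<0$; such a point is either the base point of a separatrix of the original foliation (if $q_0\notin D$, i.e.\ $q_0$ lies on a strict transform of a curve, or more precisely a singularity of $\mathcal{\tilde F}$ off $D$ sitting on $P_{j_0}$) — in the non-dicritical setting a non-corner reduced singularity on $P_{j_0}$ with a transverse separatrix projects to a separatrix of $\mathcal F$ at $p$. So each such point yields at least one separatrix. Then I would remove from $\Gamma'$ the vertex or the relevant structure associated with $q_0$ — more efficiently, I would argue by induction on the number of nodal corners. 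If $\Gamma'$ has no nodal corner, Theorem~\ref{separatriz} already gives one non-corner singularity with negative real part of $\mathrm{CS}$, hence at least one separatrix, and $s\ge 1>0=c$. For the inductive step, pick a nodal corner $q=P_a\cap P_b$; cutting the tree $\Gamma'$ at the edge corresponding to $q$ splits it into two subtrees $\Gamma_a\ni P_a$ and $\Gamma_b\ni P_b$, each still a tree with negative definite intersection matrix (principal submatrix). Each $\Gamma_i$ carries strictly fewer nodal corners than $\Gamma'$ (it has lost at least the corner $q$), so by induction the foliation restricted near $\bigcup\Gamma_a$ has more separatrices than its nodal corners, and likewise for $\Gamma_b$. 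Summing, the number of separatrices arising from $\Gamma_a$ plus those from $\Gamma_b$ exceeds $(\text{nodal corners in }\Gamma_a)+(\text{nodal corners in }\Gamma_b)=c-1$, i.e.\ is $\ge c$; I then need to produce \emph{one more} separatrix to beat $c$ by one. That extra separatrix should come from the nodal corner $q$ itself together with the sign information of $\mathrm{CS}$: at a node, both $\mathrm{CS}(P_a,q)$ and $\mathrm{CS}(P_b,q)$ are positive reals (indeed reciprocals of each other), so applying Theorem~\ref{separatriz} to one of the two subtrees that \emph{does} include the corner's contribution forces, via the Camacho--Sad formula accounting along that subtree, an extra non-corner singularity with $\mathrm{Re}\,\mathrm{CS}<0$ that was not counted before.

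The main obstacle, and the step needing real care, is the last one: making the counting of separatrices \emph{disjoint} across the two subtrees and genuinely extracting the $+1$ from the nodal corner rather than double-counting. Concretely I must ensure that the separatrix produced by Theorem~\ref{separatriz} applied to $\Gamma_a$ and the one produced for $\Gamma_b$ are distinct (they live over disjoint parts of the divisor once we cut at $q$, so this should be fine provided we are careful that a separatrix is not "shared" through the corner $q$ — but a separatrix through a corner would be one of the two coordinate axes at $q$, i.e.\ a local branch of $D$ itself, not a genuine separatrix of $\mathcal F$, so this case is excluded), and that the sign bookkeeping in the Camacho--Sad relation along a subtree that has been truncated at a node still yields a strict inequality forcing a new negative-real-part index. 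I would handle this by redoing the inductive estimate of the proof of Theorem~\ref{separatriz} while carrying along the boundary contribution $\mathrm{CS}(P_a,q)>0$ at the truncation vertex $P_a$: this positive term only helps the inequality $\sum \mathrm{Re}\,\mathrm{CS}(P,p_j)>w(P)$ go through, so the argument still produces a non-corner singularity of negative real index, and crucially a \emph{different} one for each of the two sides, giving the desired strict inequality $s>c$. The dicritical components require only that we observe they contribute separatrices too (or at worst are irrelevant to the count), so restricting attention to $\Gamma'$ loses nothing in the direction of the inequality.
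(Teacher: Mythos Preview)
Your core idea---cut the resolution tree at nodal corners and apply Theorem~\ref{separatriz} to the pieces---is exactly the paper's strategy, but your execution has two problems that the paper's formulation avoids.

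First, the arithmetic. You write that $s_a + s_b$ ``exceeds $c_a + c_b = c-1$, i.e.\ is $\ge c$'', and then go hunting for one more separatrix. But you have \emph{two} strict integer inequalities $s_a > c_a$ and $s_b > c_b$; using integrality on each separately gives $s_a + s_b \ge (c_a+1)+(c_b+1)=c+1>c$ immediately. The ``main obstacle'' you spend the last paragraph on does not exist. Second, and more structurally, your inductive hypothesis does not literally apply to $\Gamma_a$ and $\Gamma_b$: these subtrees are not resolutions of isolated planar singularities, so the theorem as stated says nothing about them. Worse, once you cut at the nodal corner $q = P_a \cap P_b$, the point $q$ becomes a \emph{non-corner} singularity of the configuration $\Gamma_a$, and its transverse ``separatrix'' there is a local branch of $P_b$---not a separatrix of $\mathcal F$. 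So the inductive count of separatrices on subgraphs is not well-posed without reformulation.

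The paper sidesteps both issues by cutting all at once rather than one corner at a time. It deletes from $D$ every non-negative singularity (in particular every nodal corner) and every dicritical component; since $\Gamma$ is a tree, removing the $c$ edges corresponding to nodal corners already produces at least $c+1$ connected components. Theorem~\ref{separatriz} is then applied directly to the closure $D'$ of each component, yielding a singular point $q \in P_{j_0}\subset D'$, not a corner of $D'$, with $\mathrm{Re}\,\mathrm{CS}(P_{j_0},q)<0$. This single sign condition does all the work you were trying to arrange by hand: every deleted corner has $\mathrm{Re}\,\mathrm{CS}\ge 0$ (for a node, $\mathrm{CS}>0$), so $q$ cannot coincide with any of them and is therefore a genuine non-corner of the full divisor, giving a separatrix of $\mathcal F$. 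Hence $s \ge c+1 > c$. This is the content of Theorem~\ref{CSref}; your final paragraph is reaching for precisely this sign argument, but the direct, non-inductive formulation makes both the ``extra $+1$'' and the disjointness of the separatrices automatic.
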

We say that a reduced singular point $z$ is \emph{non-negative}%
\footnote{\emph{This definition includes the bad points defined in \cite{ORV}.}%
} if $z$ has two separatrices $S_{1},S_{2}$ with $\mbox{Re}(\mbox{CS}(S_{i},z))\geq0$,
$i=1,2$. If $z$ is a saddle-node the above condition means that
the central manifold of $z$ has Camacho-Sad index with nonnegative
real part. If $z$ is not a saddle-node, that is, if $z$ has eigenvalue
$\lambda\neq0$, then $z$ is non-negative if $\mbox{Re}(\lambda)\geq0$\footnote{Observe that in this case we have $\mbox{sgnReCS}(\lambda)=\mbox{sgnReCS}(\frac{1}{\lambda})$. Then $\mbox{sgnReCS}(\lambda)$ is well defined and is an analytical invariant.} and it is either a node or a hyperbolic singularity. Now, it is sufficient to prove the following
theorem, which improves Theorem 1.2 of \cite{ORV}.
\begin{thm}
\label{CSref} Let $D$ be the exceptional divisor of the resolution at
$p\in V$ of $\mathcal{F}$.
Then there exist at least one separatrix issuing from each connected
component of
\[
D_{*}=D-\{\mbox{non-negative singularities}\}-\{\mbox{dicritical components}\}
\]
\end{thm}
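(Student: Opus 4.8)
The plan is to reduce the statement to Theorem \ref{separatriz} applied to a carefully chosen subgraph, mimicking the argument used in the Strong Separatrix Theorem but tracking where the separatrix actually issues. Let $\Gamma$ be the resolution graph of $\mathcal{F}$ at $p$, and let $C$ be a connected component of $D_*$. By definition, $C$ is obtained from a connected piece of $D$ by removing the non-negative singular points and the dicritical components; so $C$ corresponds to a connected subgraph $\Gamma'$ of $\Gamma$ all of whose vertices are non-dicritical components, and along which we have ``room'' for the Camacho--Sad indices to behave. First I would verify that the intersection matrix of $\Gamma'$ is negative definite: this is immediate because $\Gamma'$ is a subgraph of $\Gamma$, whose full intersection matrix is negative definite (as recalled in the excerpt, using \cite{Du Val}, \cite{Laufer}, \cite{Mumford}), and any principal submatrix of a negative definite matrix is negative definite. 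Also $\Gamma'$ is a tree, being a connected subgraph of the tree $\Gamma$.

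Next I would apply Theorem \ref{separatriz} to $\Gamma'$ with the lifted foliation $\tilde{\mathcal{F}}=\pi^*\mathcal{F}$ restricted to a neighborhood of $\bigcup_{j\in J'}P_j$. This yields a singular point $q\in P_{j_0}$, $j_0\in J'$, not a corner of $\Gamma'$ (i.e. $q\notin P_j$ for $j\in J'\setminus\{j_0\}$), with $\mathrm{Re}\,\mathrm{CS}(P_{j_0},q)<0$. The point $q$ is therefore not a non-negative singularity in the sense defined just before the theorem: if $q$ had eigenvalue $\lambda\neq 0$ then $\mathrm{Re}(\lambda)<0$ by the sign remark in the footnote, so $q$ is a saddle or hyperbolic with a separatrix transverse to $P_{j_0}$; and if $q$ is a saddle-node then the index with negative real part must be that of the strong (non-central) separatrix, which is again transverse to $P_{j_0}$ and hence a genuine separatrix issuing from $C$. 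In either case $q$ gives rise to a separatrix $S$ of $\tilde{\mathcal{F}}$ through $q$ transverse to the divisor, and $\pi(S)$ is a separatrix of $\mathcal{F}$ at $p$ whose strict transform meets $P_{j_0}\subset C$; so the separatrix issues from $C$.

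The main obstacle I anticipate is the bookkeeping at the boundary of the component $C$: I must ensure that the point $q$ produced by Theorem \ref{separatriz} is not ``lost'' by lying on a component outside $\Gamma'$. The theorem guarantees $q$ is not a corner \emph{within} $\Gamma'$, but a priori $q$ could sit on some $P_j$ with $j\in J\setminus J'$ (as the excerpt itself warns in the Strong Separatrix Theorem discussion). If that neighboring $P_j$ is dicritical, there is no contradiction and $q$ still yields a separatrix transverse to $P_{j_0}$, so we are fine; if it is invariant, it was removed from $D_*$ only because it contains a non-negative singularity or is itself excised — here I need to argue that $q$ cannot be the corner $P_{j_0}\cap P_j$ with $P_j$ invariant and adjacent in $\Gamma$, because such a corner would have been included in $\Gamma'$ by the construction of connected components of $D_*$ (the removal of a non-negative \emph{point} does not disconnect the two invariant components meeting there unless that point \emph{is} the non-negative singularity). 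Making this dichotomy precise — that the separatrix through $q$, whether $q$ is an interior singular point of $P_{j_0}$ or a corner with a dicritical component, always projects to a separatrix of $\mathcal{F}$ whose strict transform meets $C$ — is the technical heart of the argument; once it is settled, running this over each connected component of $D_*$ completes the proof, and Theorem \ref{strongCS} follows by counting, since each nodal corner is a non-negative singularity and hence lies outside $D_*$, forcing $D_*$ to have strictly more components than the complement can absorb.
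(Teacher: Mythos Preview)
Your strategy is the paper's: take the closure $D'=\bigcup_{j\in J'}P_j$ of a connected component of $D_*$, apply Theorem~\ref{separatriz} to the associated subgraph (a tree with negative definite intersection matrix, being a subgraph of $\Gamma$), obtain a singular $q\in P_{j_0}$, not a corner of $D'$, with $\mathrm{Re}\,\mathrm{CS}(P_{j_0},q)<0$, and conclude that $q$ yields a separatrix issuing from that component.

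Where you leave a gap is precisely the boundary bookkeeping you flag as ``the technical heart'' and then do not settle. In the paper this is dispatched in one line, and the point is that the inequality you already have does all the work. Suppose $q\in P_j$ for some $j\in J\setminus J'$. If $P_j$ is invariant, then the corner $P_{j_0}\cap P_j$ must be a non-negative singularity: otherwise that corner lies in $D_*$ and connects $P_j$ to the same component, forcing $j\in J'$. But then by the definition of non-negative, $\mathrm{Re}\,\mathrm{CS}(P_{j_0},q)\ge 0$, contradicting $\mathrm{Re}\,\mathrm{CS}(P_{j_0},q)<0$. If $P_j$ is dicritical, the corner $P_{j_0}\cap P_j$ is a regular point of the reduced foliation, hence cannot be the singular point $q$. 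Either way $q\notin P_j$ for every $j\in J\setminus J'$, so $q$ is not a corner of the full divisor and its transverse separatrix genuinely issues from the chosen component. There is no residual dichotomy to make precise.

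One small slip: in your saddle-node discussion you reverse the roles. The strong manifold of a saddle-node always has Camacho--Sad index $0$, so $\mathrm{Re}\,\mathrm{CS}(P_{j_0},q)<0$ forces $P_{j_0}$ to be the \emph{central} manifold; the separatrix transverse to $P_{j_0}$ is then the strong one. Your conclusion is unaffected, but the labeling is swapped.
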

\begin{proof}
Let $D=\bigcup_{j\in J}P_{j}$ and let $D'=\bigcup_{j\in J'}P_{j'}$,
$J'\subset J$ be the closure of a connected component of $D_{*}$.
If we apply Theorem \ref{separatriz} to the graph associated to $D'$
we obtain a singular point $q\in P_{j_{0}},j_{0}\in J',q\notin P_{j},j\in J'\backslash\{j_{0}\}$
such that $\mbox{ReCS}(P_{j_{0}},q)<0$. This last inequality shows
that $q\notin P_{j}$, $j\in J\backslash J'$, so $q$ gives a separatrix. \end{proof}
\begin{rem}
\label{strong separatrix}Observe that Theorem \ref{CSref} gives a
especial kind of separatrices. In particular, if $S$ is one such
separatrix, then:
\begin{enumerate}
\item $S$ is the strong manifold of a saddle node in the resolution, or
\item $S$ passes through a singular point in the resolution with eigenvalue
$\lambda$ such that $\mbox{ReCS}(\lambda)<0$.
\end{enumerate}
The separatrices described in items (1) and (2) above will be called \emph{strong
separatrices}. Any other separatrix will be called \emph{weak separatrix}
\footnote{Strong and weak separatrices are defined only if $\mathcal{F}$ is non-reduced.%
}.
\end{rem}
\begin{rem} Since each nodal singularity, not in a corner, yields a separatrix we can replace in Theorem \ref{strongCS} the words "nodal corner" by "nodal". In fact, we can replace "nodal corner" by "non-positive".
\end{rem}

\section{Approximation chains}

Let $D=\bigcup_{j\in J}P_{j}$ be the exceptional divisor of the resolution of
$\mathcal{F}$ at $p\in V$.
\begin{defn}
An {\it approximation chain} for $\mathcal{F}$ is a sequence of invariant
components $P_{j_{1}},...,P_{j_{n}}$ ($j_{1},...,j_{n}\in J,n\in\mathbb{N}$)
with the following properties:
\begin{enumerate}
\item For each $k=1,...,n-1$ we have that $P_{j_{k}}$ intersects $P_{j_{k+1}}$at
a point $z_{k}$,
\item The singularity at $z_{k}$ is not a node,
\item If the singularity at $z_{k}$ is a saddle-node then its strong manifold
is contained in $P_{j_{k+1}}$,
\item $P_{j_{n}}$ contains a singularity $q$, not a corner, such that
$\mbox{Re CS}(P_{j_{n}} , q) < 0$.

\end{enumerate}
We call $P_{j_{1}}$ the {\it starting component} of the approximation chain.
\end{defn}

The above definition is justified by the following dynamical property.
\begin{prop}
\label{dynamical}Let $q$ be as in (4) in the definition above and let $S$
be the separatrix through $q$ transverse to $P_{j_{n}}$. Let $\{x_{n}\}_{n\in\mathbb{N}}$
be a sequence of points outside $D$ such that $x_{n}$ tends to a
regular point in $P_{j_{1}}$. Then for each $n\in\mathbb{N}$ there
is a point $y_{n}$ in the leaf through $x_{n}$ such that $y_{n}$
tends to a regular point in $S$.
\end{prop}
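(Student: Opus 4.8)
The plan is to prove Proposition \ref{dynamical} by induction on the length $n$ of the approximation chain, following the leaf as it passes successively from a neighborhood of $P_{j_1}$ through each corner $z_k$ and finally out along the separatrix $S$. The base of the induction, $n=1$, is essentially a local statement at the singularity $q\in P_{j_n}=P_{j_1}$: since $\operatorname{Re CS}(P_{j_n},q)<0$, the point $q$ is either a hyperbolic singularity, a saddle with negative eigenvalue, or a saddle-node for which $P_{j_n}$ carries the weak (center) manifold, and $S$ is the strong separatrix transverse to $P_{j_n}$. In each of these cases one has good local normal forms, and a leaf accumulating on a regular point of $P_{j_1}\setminus(\text{corners})$ near $q$ must, when followed, approach the strong manifold $S$ — this is the classical linearization/attraction argument (for the hyperbolic and negative-eigenvalue saddle cases the holonomy along $P_{j_1}$ is a contraction or the leaves spiral toward $S$; for the saddle-node it is the standard behavior of the Dulac-type normal form).

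The inductive step is the heart of the argument: assuming the statement for chains of length $n-1$, I would take the chain $P_{j_1},\dots,P_{j_n}$ and consider the corner $z_1=P_{j_1}\cap P_{j_2}$. The leaf through $x_n$ accumulates on a regular point of $P_{j_1}$; I want to produce a point on that same leaf accumulating on a regular point of $P_{j_2}$, so that I can invoke the induction hypothesis applied to the chain $P_{j_2},\dots,P_{j_n}$ (which is again an approximation chain with the same terminal data $q$, $S$). The mechanism is transport along $P_{j_1}$ by holonomy to a transversal near $z_1$, then crossing the corner $z_1$. Because $z_1$ is not a node (condition (2)) and, if it is a saddle-node, its strong manifold lies in $P_{j_2}$ (condition (3)), the local dynamics at $z_1$ in appropriate coordinates $(x,y)$ with $P_{j_1}=\{y=0\}$, $P_{j_2}=\{x=0\}$ allow a leaf coming in close to $\{y=0\}$ to be followed to a point close to $\{x=0\}$ at a regular point of $P_{j_2}$: in the linearizable/hyperbolic/saddle case this is the behavior of $x^ay^b=\text{const}$ type integrals, and in the saddle-node-with-strong-manifold-in-$P_{j_2}$ case it is exactly the normal-form statement that orbits near the weak separatrix $\{y=0\}$ sweep along $P_{j_2}$.

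I would organize the bookkeeping by choosing, once and for all, small bidiscs $U_k$ around each $z_k$ and transversals $\Sigma_k^{\text{in}},\Sigma_k^{\text{out}}$, and proving a single local lemma: ``if a leaf enters $U_k$ through a point arbitrarily close to the regular part of $P_{j_k}$, then it contains a point arbitrarily close to the regular part of $P_{j_{k+1}}$,'' with quantitative control so that the closeness can be made to go to zero as $n\to\infty$. Chaining these local lemmas together with the holonomy transport along the (compact) regular parts of the $P_{j_k}$ — which is uniformly continuous — produces the desired sequence $y_n$.

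The main obstacle I anticipate is the saddle-node corner case in the local lemma: unlike the linearizable corners, a saddle-node has no convergent linearization and the leaves spiral in a more delicate fashion governed by the Dulac normal form $\dot x = x^{p+1}$, $\dot y = y(1+\lambda x^p) + \cdots$ (up to the usual formal/sectorial corrections). One must check carefully that a leaf approaching the weak separatrix $\{y=0\}=P_{j_k}$ actually escapes along $\{x=0\}=P_{j_{k+1}}$ and reaches a point near a \emph{regular} point of $P_{j_{k+1}}$ (not trapped near $z_k$ nor wandering off the bidisc through the wrong face), and that the estimate is uniform enough to survive the induction. A second, more technical point to get right is ensuring at each stage that the regular point of $P_{j_{k+1}}$ we land near is not itself a corner of the chain nor a point over which some other component sits; this is handled by shrinking the transversals away from all corners, using that each $P_{j_k}$ has only finitely many singular points on it.
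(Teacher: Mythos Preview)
Your approach is essentially the paper's: the proposition is reduced to iterating a single local ``corner--crossing'' lemma (if $x_n$ tends to a regular point of one separatrix of a reduced singularity with eigenvalue $\lambda\notin[0,\infty)$, or of the weak separatrix of a saddle--node, then the leaf through $x_n$ contains $y_n$ tending to a regular point of the other/strong separatrix), applied successively along the chain $P_{j_1},\dots,P_{j_n}$ and finally at $q$. The paper proves this local lemma very cleanly by following the real--time flow of the vector field in the $\lambda\notin[0,\infty)$ case (Lemma~\ref{silla}) and by invoking the Hukuhara--Kimura--Matuda sectorial normalization in the saddle--node case (Lemma~\ref{silla nodo}), which sharpens the heuristic arguments you sketch and removes the need for any delicate quantitative bookkeeping.
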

The proof of this proposition is given below.
The main result of this section is the following existence theorem
for approximation chains.
\begin{thm}
\label{existencia de cadenas}Any non-dicritical component of the
resolution of $\mathcal{F}$ is the starting component of an approximation
chain. \end{thm}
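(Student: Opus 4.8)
The plan is to build the approximation chain by an inductive "climbing" procedure along the resolution tree, using Theorem \ref{separatriz} at each stage to produce the next singular point with negative Camacho-Sad index. Start with a non-dicritical component $P_{j_1}$. First I would look at the singular points of $\mathcal{F}$ on $P_{j_1}$ that lie outside the corners: if some such point $q$ has $\mathrm{Re}\,\mathrm{CS}(P_{j_1},q)<0$, then the one-term sequence $P_{j_1}$ already satisfies conditions (1)--(4) vacuously for (1)--(3) and directly for (4), and we are done. Otherwise every non-corner singularity on $P_{j_1}$ is non-negative (in the sense of the definition preceding Theorem \ref{CSref}), and in particular $\mathrm{Re}\,\mathrm{CS}(P_{j_1},q)\ge 0$ there. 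The idea is then to pass to a neighbouring component through a corner $z_1=P_{j_1}\cap P_{j_2}$, choosing the corner and the side so that conditions (2) and (3) hold, and so that the "deficit" in the Camacho-Sad formula on $P_{j_1}$ is pushed forward onto $P_{j_2}$.

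The key mechanism is the following local step, which mirrors the inductive computation in the proof of Theorem \ref{separatriz}. Since the intersection matrix is negative definite and $\Gamma$ is a tree, we may root the tree at a component far from $P_{j_1}$ and apply Proposition \ref{arbol} to get the real negative function $h$; orienting the tree so that $P_{j_1}$ is toward the minimal end, the Camacho-Sad formula on $P_{j_1}$ together with the assumption that all non-corner indices on $P_{j_1}$ are $\ge 0$ forces $\mathrm{Re}\,\mathrm{CS}(P_{j_1}, z) \le h(P_{j_1}) < 0$ for at least one corner $z = P_{j_1}\cap P_{j_2}$ with $P_{j_2}$ the successor of $P_{j_1}$. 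Because this index is non-zero with negative real part, $z$ is not a node; and if $z$ is a saddle-node, the separatrix along $P_{j_1}$ at $z$ has Camacho-Sad index with negative real part, hence is the \emph{strong} manifold of the saddle-node (the weak/central manifold of a saddle-node has index $0$), so it is $P_{j_1}$ that carries the strong manifold — meaning condition (3) would require the strong manifold to lie in $P_{j_2}$, which fails. To fix orientation I instead climb in the opposite direction: reading the Camacho-Sad relation from the $P_{j_2}$ side gives $\mathrm{Re}\,\mathrm{CS}(P_{j_2},z)\ge 1/\mathrm{Re}\,\mathrm{CS}(P_{j_1},z) \ge 1/h(P_{j_1})$, and one checks that at a saddle-node corner $z$ the strong manifold then lies along $P_{j_2}$, so conditions (2) and (3) are met with $z_1 = z$. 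Iterating, at each new component $P_{j_{k+1}}$ we again ask whether some non-corner singularity has index with negative real part (stop, condition (4)) or not (climb again, using the accumulated lower bound on $\mathrm{Re}\,\mathrm{CS}(P_{j_{k+1}}, \cdot)$ at the entry corner as the analogue of the "$1/h$" term).

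Termination is where the real content lies: the process must stop with condition (4) after finitely many steps. This I would obtain exactly as in the proof of Theorem \ref{separatriz}: if the chain never terminated, it would have to revisit a component (the divisor has finitely many components) or else run off the tree; but the monotone estimates coming from Proposition \ref{arbol} — each step contributes a term $1/h(P_{j_k})$ and $h$ is negative — combined with the Camacho-Sad relation on the last component would yield, exactly as in the final displayed contradiction of the proof of Theorem \ref{separatriz}, an inequality $\sum 1/h(P_{j_k}) > w(P)$ that violates negative definiteness. More carefully, a chain that only uses corners and never stops would, after reaching a maximal element $P$ of the (re-rooted) tree, force $\sum_{j}\mathrm{Re}\,\mathrm{CS}(P,p_j) > w(P)$ over the immediate predecessors $p_j$, contradicting the Camacho-Sad index formula on $P$ (whose total is $w(P)$, with the remaining non-corner indices assumed $\ge 0$). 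Hence some component along the chain has a non-corner singularity with $\mathrm{Re}\,\mathrm{CS}<0$, and truncating the chain there gives the desired approximation chain.

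\medskip

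The main obstacle I anticipate is bookkeeping the orientation of the tree and the direction of climbing so that condition (3) (the saddle-node strong manifold lying in the \emph{next} component) is consistent with the direction in which the Camacho-Sad estimates propagate — the inequality $\mathrm{Re}\,\mathrm{CS}(P_{j_{k+1}},z)\ge 1/\mathrm{Re}\,\mathrm{CS}(P_{j_k},z)$ only goes the "right" way when we are moving from a component with negative-real-part index at $z$ toward its neighbour, and one must verify that at a saddle-node corner this forces the strong separatrix onto the side we are moving to. A secondary subtlety is the base case: ensuring that a non-dicritical starting component which happens to be a leaf of the tree, or which is adjacent only to nodal corners, is still handled — here one uses that a nodal corner contributes a positive real index (eigenvalue $\lambda>0$), which only helps the inequality, so the argument producing a non-corner singularity with negative-real-part index, or a non-nodal corner to climb through, still goes through.
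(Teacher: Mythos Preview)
Your proposal has two genuine problems.

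First, a factual error: at a saddle-node it is the \emph{strong} separatrix whose Camacho--Sad index is $0$, not the weak (central) one. In the Dulac normal form $x^{k+1}\partial_x + y(1+ax^k)\partial_y$ the strong separatrix $\{x=0\}$ has index $0$, while the central manifold $\{y=0\}$ has index $a$, which can be any complex number. Hence when you obtain $\mathrm{Re}\,\mathrm{CS}(P_{j_1},z)<0$ at a saddle-node corner $z$, this forces $P_{j_1}$ to be the \emph{weak} side and $P_{j_2}$ to be the strong side, so condition (3) is satisfied directly. Your ``fix'' reversing orientation is based on the wrong premise and only muddles the argument.

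Second, and more seriously, the iterative climbing is not well-defined as written. You root the tree with $P_{j_1}$ ``toward the minimal end'' and then invoke $\mathrm{Re}\,\mathrm{CS}(P_{j_1},z)\le h(P_{j_1})$ at the corner with its successor; but in the proof of Theorem \ref{separatriz} this inequality is obtained only \emph{after} the predecessors of $P_{j_1}$ have been processed, and it uses the contradiction hypothesis on every one of them. Your chain neither visits nor controls those predecessors. If instead you climb greedily through any corner with negative real index, nothing prevents you from entering a leaf component whose sole singularity is the incoming corner, where you are stuck; the function $h$ does not by itself tell you which branch to take to avoid this. You correctly flag orientation as ``the main obstacle'', but you do not resolve it.

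The paper sidesteps both issues by a different device. It introduces a partial order on the invariant components intrinsic to the foliation: $P_i<P_j$ at a saddle-node corner exactly when the strong manifold lies in $P_j$, and $P_i=P_j$ at corners with eigenvalue $\lambda\notin[0,\infty)$. From $P_{j_1}$ one first climbs to a maximal element (this portion of the chain satisfies (2) and (3) by construction), and then applies Theorem \ref{separatriz} \emph{once} to the connected component of the set of maximal elements with nodal corners removed. The point $q$ it produces is not a corner of the full divisor, because the only corners leading out of this subgraph are nodes, saddle-nodes with strong side inside, or meet dicritical components, none of which can have $\mathrm{Re}\,\mathrm{CS}<0$ on the inner side. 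Connecting to $q$ within the maximal set finishes the chain, with (2) and (3) holding along this second portion since all corners there have eigenvalue $\lambda\notin[0,\infty)$.
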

\begin{proof}
We introduce an order in the resolution graph $\Gamma$ of $\mathcal{F}$
as follows: If $P_{i}\cap P_{j}\neq\emptyset$ and $z=P_{i}\cap P_{j}$
is a saddle-node, then $P_{i}<P_{j}$ whenever the strong manifold
of $z$ is contained in $P_{j}$. If $z=P_{i}\cap P_{j}\neq\emptyset$
and $z$ has eigenvalue $\lambda\notin[0,+\infty)$, then $P_{i}=P_{j}$.
Clearly the set of maximal elements is not empty. Let $\mathcal{M}$
be the union of all maximal elements and $\mathcal{M}_{c}$ a connected
component of $\mathcal{M}\backslash\{\mbox{nodal corners}\}$. Then
$\overline{\mathcal{M}_{c}}={\displaystyle \bigcup_{j\in J_{c}}}P_{j}$,
$J_{c}\subset J$ and the components $\{P_{j}\}_{j\in J_{c}}$ are
all of the same order. This means that whenever $i,j\in J_{c}$ and
$P_{i}\cap P_{j}\neq\emptyset$ then this intersection has eigenvalue
$\lambda\notin[0,+\infty)$. Moreover, if $z=P_{i}\cap P_{j}$ and
$i\in J_{c},j\notin J_{c}$ we have the following possibilities:
\begin{enumerate}
\item $z$ is a node,
\item $z$ is a saddle-node with strong manifold contained in $P_{i}$,
or
\item $P_{j}$ is a dicritical component, so $z$ is not singular for the
foliation.
\end{enumerate}
By Theorem \ref{separatriz} there is a singular point $q\in P_{j_{0}}$,
$j_{0}\in J_{c}$, $j_{0}\notin P_{j}$, $j\in J_{c}\backslash\{j_{0}\}$
such that $\mbox{Re CS}(P_{j_{0}},q)<0$. Therefore the possibilities
(1),(2),(3) above show that $q\notin P_{j}$ for $j\notin J_{c}$, so $q$
is not a corner.

Therefore, given any invariant component $P$ in $D$, we find a sequence
$P=P_{j_{1}}\le...\le P_{j_{n}}$ such that $P_{j_{n}}$ contains
a singularity $q$, not a corner, such that $\mbox{Re CS}(P_{j_{n}},q)<0$.
Clearly $P_{j_{1}},...,P_{j_{n}}$ is an approximation chain.
\end{proof}

For the proof of Proposition  \ref{dynamical} we need the following Lemmas.
\begin{lem}
\label{silla}Let $\mathcal{F}$ be a holomorphic foliation with a
reduced singularity at
$0\in\mathbb{C}^{2}$ and eigenvalue $\lambda\notin[0,\infty)$. Let $S$ be a separatrix through $0\in\mathbb{C}^{2}$.
Let $(p_{n})_{n\in\mathbb{N}}$ be a sequence of points outside the
separatrices such that $p_{n}\rightarrow0$ as $n\rightarrow\infty$.
Then, for each $n\in\mathbb{N}$ there exists a point $q_{n}$ in
the leaf through $p_{n}$ such that $q_{n}$ tends to a  point
in $S\backslash\{0\}$ as $n\rightarrow\infty$. \end{lem}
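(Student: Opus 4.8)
The plan is to work in the linearizing coordinates of the reduced singularity and track the leaf of a point $p_n$ as it is dragged along near the separatrix $S$. Since the eigenvalue $\lambda\notin[0,\infty)$, the singularity is either a hyperbolic saddle, a genuine saddle (real negative $\lambda$), or a saddle-node; in the saddle-node case $S$ should be taken as the (unique) convergent separatrix or the strong one as needed. First I would reduce to the two model situations. If $\lambda\notin\mathbb{R}$ or $\lambda<0$, there are local holomorphic coordinates $(x,y)$ with the foliation given by $x\,dy-\lambda y\,dx=0$, the two separatrices being $\{x=0\}$ and $\{y=0\}$, and a multivalued first integral $y x^{-\lambda}$. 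Without loss of generality take $S=\{y=0\}$ and work in the polydisc $\mathbb{D}\times\mathbb{D}$. Write $p_n=(x_n,y_n)$ with $x_n\ne 0\ne y_n$ (this is where we use that $p_n$ is outside the separatrices) and $(x_n,y_n)\to 0$.

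The key step is the holonomy/transport argument. Fix a small circle $|x|=r$ in the $x$-separatrix and consider the transversal $\Sigma=\{x=r\}$. The leaf through $p_n$ meets a transversal near $x_n$; following the leaf, $\log y$ changes by $\lambda\log x + \text{const}$ along the leaf, so as $|x|$ increases from $|x_n|$ toward $r$, $|y|$ evolves like $|y_n|\,(|x|/|x_n|)^{\mathrm{Re}\,\lambda}\cdot(\text{bounded oscillating factor})$. The point is to choose, along the leaf through $p_n$, a point $q_n$ with $x$-coordinate on the fixed circle $|x|=r$ (possible because the leaf accumulates on the whole $x$-separatrix: its projection to the $x$-disc is a covering-like map that is onto a punctured neighborhood of $0$, so in particular hits $|x|=r$). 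At such $q_n$ the $y$-coordinate satisfies $|y(q_n)| = |y_n|\cdot (r/|x_n|)^{\mathrm{Re}\,\lambda}\cdot e^{O(1)}$; I would instead pick $q_n$ on the leaf where $|x(q_n)|=r$ is reached for the first time coming from $p_n$ along a path on which the argument of $x$ is monotone, making the oscillating factor controlled. Since $|x_n|\to 0$, if $\mathrm{Re}\,\lambda<0$ this forces $|y(q_n)|\to\infty$, contradicting that we stay in $\mathbb{D}\times\mathbb{D}$ — so instead one lets $q_n$ be the last point before leaving the polydisc, i.e. $|y(q_n)|=1$; then $|x(q_n)|=|x_n|\,(|y(q_n)|/|y_n|)^{1/\mathrm{Re}\lambda}e^{O(1)}\to 0$, so $q_n\to$ a point of $S\setminus\{0\}$ on the circle $|y|=1$. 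If $\mathrm{Re}\,\lambda>0$ we run the symmetric argument exchanging the roles of the two separatrices so that $S$ is the one along which $|y|$ decreases. If $\mathrm{Re}\,\lambda=0$ (pure imaginary $\lambda$), $|yx^{-\lambda}|$ is itself a genuine first integral (since $|x^{-\lambda}|$ is locally constant in modulus only up to the oscillation $e^{\mathrm{Im}\lambda\cdot\arg x}$), so one argues that along the leaf the quantity $|y|\,e^{\mathrm{Im}\lambda\,\arg x}$ is constant and, letting $\arg x\to+\infty$ or $-\infty$ along the spiraling leaf, drives $|y|\to 0$ while $|x|$ stays bounded away from $0$; again extract $q_n$ on a fixed circle $|x|=r$ and pass to a subsequence.

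For the saddle-node case, $\lambda=0$, use the normal form $x^{p+1}\,dy-(y(1+\mu x^p))\,dx=0$ (Dulac): the strong separatrix is $\{y=0\}$ and the weak/central manifold is $\{x=0\}$ (possibly only formal, but $\{x=0\}$ is always an actual analytic separatrix). If $S=\{y=0\}$ the first integral along leaves gives $\log y \sim -x^{-p}/p+\mu\log x$, so moving from $x_n$ toward a fixed circle $|x|=r$ with $|x_n|\to0$ makes the term $\mathrm{Re}(-x^{-p}/p)$ dominate; choosing the path so this real part $\to-\infty$ forces $|y|\to 0$, and I extract $q_n$ as the point on the leaf with $|x|=r$, giving $q_n\to(r,0)\in S\setminus\{0\}$ after passing to a subsequence in $\arg$. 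If instead $S=\{x=0\}$ one pushes the leaf the other way, toward $x=0$; the same estimate shows $|y|$ then tends to $0$ as well, so $q_n\to 0$ — but we need $q_n\to S\setminus\{0\}$, so in that case one should follow the leaf only partway and use that the leaf accumulates on all of $\{x=0\}$, selecting $q_n$ on a fixed circle $|y|=\rho$ in the transversal to $\{x=0\}$.

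The main obstacle is controlling the oscillating factor $e^{O(1)}$ — i.e. the unbounded winding of $\arg x$ along the leaf — and making sure the selected point $q_n$ lands on a fixed compact transversal rather than escaping or converging to the singularity. I would handle this by choosing a definite transversal $\Sigma_S$ to $S$ at a fixed point $s_0\in S\setminus\{0\}$, and arguing that the leaf $L_n$ through $p_n$, being a leaf of the linear (or Dulac) model, must intersect $\Sigma_S$: its closure contains $S$ by the explicit description of leaves $|y|=c|x|^{\mathrm{Re}\lambda}$ (resp. the Dulac leaves), and for $p_n$ close enough to $0$ the constant $c=c_n$ ranges over values making $L_n\cap\Sigma_S\ne\emptyset$; then $q_n\in L_n\cap\Sigma_S$ and a compactness argument on $\Sigma_S$ yields a convergent subsequence to a point of $S\setminus\{0\}$, which is enough since the statement only claims existence of such $q_n$ for each $n$ and convergence along the sequence.
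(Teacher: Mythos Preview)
Your proposal has two genuine gaps and one unnecessary digression.

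First, you assume the singularity is analytically linearizable (``there are local holomorphic coordinates $(x,y)$ with the foliation given by $x\,dy-\lambda y\,dx=0$''). This is false in general: for resonant saddles ($\lambda\in\mathbb{Q}_{<0}$) and for many irrational Siegel-type saddles the foliation is not holomorphically equivalent to its linear part, so your explicit first integral $yx^{-\lambda}$ and the leaf formula $|y|=c|x|^{\mathrm{Re}\lambda}$ are not available. The paper avoids this entirely: it keeps the vector field in the form $Z=xA\,\partial_x+yB\,\partial_y$ with $A,B$ holomorphic and, after multiplying $Z$ by a suitable complex constant, arranges $\mathrm{Re}\,A>0>\mathrm{Re}\,B$ throughout a fixed bidisc. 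Then along the \emph{real-time} flow of $Z$ one has $\frac{d}{dt}|x|^2=2|x|^2\mathrm{Re}\,A>0$ and $\frac{d}{dt}|y|^2=2|y|^2\mathrm{Re}\,B<0$, so $|x(t)|$ is strictly increasing and $|y(t)|$ strictly decreasing. The orbit of $p_n$ therefore exits the bidisc through the face $\{|x|=a\}$ at a single well-defined point $q_n$, and $q_n\to\{|x|=a,\,y=0\}\subset S$. No linearization, no case split on $\mathrm{Re}\,\lambda$, no winding control needed.

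Second, your case analysis is internally inconsistent. In the case $\mathrm{Re}\,\lambda<0$ you write $|y(q_n)|=|y_n|(r/|x_n|)^{\mathrm{Re}\lambda}\to\infty$, but $(r/|x_n|)^{\mathrm{Re}\lambda}\to 0$ when $\mathrm{Re}\,\lambda<0$ and $|x_n|\to 0$, so actually $|y(q_n)|\to 0$ and the direct choice $|x(q_n)|=r$ already works; the detour through $|y(q_n)|=1$ is backwards. In the case $\mathrm{Re}\,\lambda>0$ you propose ``exchanging the roles of the two separatrices'', but $S$ is given in the statement and cannot be swapped; moreover $\mathrm{Re}(1/\lambda)=\mathrm{Re}\lambda/|\lambda|^2>0$ as well, so the symmetry does not help. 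What really makes the leaf approach $S=\{y=0\}$ in this case is the oscillating factor $e^{-\mathrm{Im}\lambda\cdot\arg x}$, i.e.\ one must wind $\arg x$ indefinitely; you acknowledge this is ``the main obstacle'' but do not resolve it. The paper's monotonicity trick is exactly what replaces this delicate control: choosing the real direction in the leaf so that both $|x|$ and $|y|$ move monotonically eliminates the winding issue altogether.

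Finally, the hypothesis $\lambda\notin[0,\infty)$ already excludes $\lambda=0$, so your entire saddle-node discussion is extraneous here (the paper treats that case in a separate lemma via the sectorial normalization theorem).
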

\begin{proof}
In a neighborhood of $0\in\mathbb{C}^{2}$ the foliation $\mathcal{F}$
is generated by a holomorphic vector field $Z$
\[
Z=\lambda_{1}x(1+\cdots)\frac{\partial}{\partial x}+\lambda_{2}y(1+\cdots)\frac{\partial}{\partial y},
\]
 with $Re(\lambda_{1})>0>Re(\lambda_{2})$ and such that $S$ is given
by $\{y=0\}$. Then in a neighborhood $U$ of $0\in\mathbb{C}^{2}$
we have $Z=xA\frac{\partial}{\partial x}+yB\frac{\partial}{\partial y}$
with $Re(A)>0>Re(B)$. Let $\phi$ be the real flow associated to
$Z$ and let $a,b>0$ be such that
\[
\{|x|\leq a,|y|\leq b\}\subset U.
\]
 Put $\phi(t,p_{n})=(x(t),y(t))$ and $g(t)=|x(t)|^{2}$. A straightforward
computation shows that
\[
g'(t)=2|x(t)|^{2}Re\{A(t)\}>0,
\]
 hence the function $|x(t)|$ is strictly increasing. Analogously
we prove that $|y(t)|$ is strictly decreasing. Thus, since $p_{n}=(x_{0},y_{0})$
with $|x_{0}|\leq a$ and $|y_{0}|\leq b$ we have that the orbit
of $p_{n}$ intersects the set $\{|x|=a,|y|\le b\}$ at exactly one
point $q_{n}$. Finally it is easy to prove that $q_{n}$ tends to
a point in $\{|x|=a,y=0\}\subset S$.
\end{proof}

\begin{lem}
\label{silla nodo} Let $\mathcal{F}$ be a holomorphic foliation
with a saddle-node singularity at $0\in\mathbb{C}^{2}$. There is
a neighborhood $U$ of $0\in\mathbb{C}^{2}$ with the following property:
If $L$ is a leaf of $\mathcal{F}|_{U}$ other than the central manifold,
then $\overline{L}$ contains the strong manifold of the saddle-node.\end{lem}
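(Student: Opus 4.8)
The plan is to reduce the global statement to a normal form computation using the classical Dulac/Hukuhara--Kimura--Matuda theory of the saddle-node. First I would recall that after a holomorphic change of coordinates near $0\in\mathbb{C}^{2}$ the saddle-node foliation is given by a vector field of the form
\[
Z = x^{k+1}\frac{\partial}{\partial x} + y\bigl(1+\lambda x^{k}\bigr)\frac{\partial}{\partial y},
\]
for some integer $k\ge 1$ and some $\lambda\in\mathbb{C}$, with the strong manifold equal to $\{y=0\}$ and the weak (central) manifold tangent to $\{x=0\}$ (and formal in general). The key qualitative fact I want to extract is that in the $x$-coordinate the leaves are governed by the one-dimensional equation $\dot x = x^{k+1}$, whose real flow pushes every nonzero initial condition $x_0$ with $|x_0|$ small toward $0$ along a well-understood spiralling/monotone pattern: for a suitable choice of sign of time (or, equivalently, restricting to the appropriate half of the orbit) $|x(t)|$ is eventually strictly decreasing and $x(t)\to 0$. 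This is the analogue of the monotonicity argument used in Lemma~\ref{silla}.

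Next I would make this precise on a fixed polydisc $U=\{|x|\le a,\ |y|\le b\}$ chosen small enough that the perturbation terms do not destroy the monotonicity of $|x(t)|$ and that $\{|x|\le a\}$ lies in the basin described above. Given a leaf $L$ of $\mathcal{F}|_U$ other than the central manifold, a point of $L$ has $x$-coordinate $x_0\ne 0$ (since the central manifold is the only leaf lying in $\{x=0\}$, by uniqueness of the central manifold up to the relevant order). Following the real flow from that point, $x(t)\to 0$ while $|y(t)|$ is controlled by the linear-type equation $\dot y = y(1+\lambda x^{k})$: once $|x(t)|$ is small the factor $1+\lambda x^k$ is close to $1$, so $|y(t)|$ grows and the orbit must exit the $y$-slab $\{|y|\le b\}$ — here I would instead run time in the direction that makes $|y(t)|$ \emph{decrease}, so that $y(t)\to 0$. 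The correct bookkeeping is that along the appropriate semi-orbit one has simultaneously $x(t)\to 0$ and $y(t)\to 0$, i.e. the orbit accumulates on the origin; then by looking at where successive pieces of the orbit cross the boundary torus $\{|x|=a\}$ one shows the closure of $L$ meets $\{y=0\}$ in points with $|x|=a$, and by flowing those back, $\overline L\supset\{y=0\}\cap U$, which is the strong manifold.

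The step I expect to be the main obstacle is controlling the $y$-coordinate uniformly: unlike the hyperbolic case of Lemma~\ref{silla}, here the "eigenvalue ratio" degenerates, the central manifold need not be analytic, and the naive estimate $|\dot y|\le C|y|$ only gives that $|y(t)|$ stays comparable to its initial value over bounded time — not that it tends to $0$. To get genuine convergence $y(t)\to 0$ one must use that the flow time to push $x$ from $x_0$ down to scale $\varepsilon$ is long (of order $|x_0|^{-k}$), and integrate the equation for $y$ over that long time while tracking the sign of $\mathrm{Re}\int (1+\lambda x(t)^k)\,dt$; choosing the orbit direction so that this integral tends to $-\infty$ forces $|y(t)|\to 0$. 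Making this integral estimate rigorous, uniformly for all leaves and all base points in $U$, and then assembling the pieces of the orbit that cross $\{|x|=a\}$ into a sequence converging to a prescribed point of the strong manifold, is the technical heart of the argument; everything else is the same exit-at-a-fixed-radius bookkeeping already used in the proof of Lemma~\ref{silla}.
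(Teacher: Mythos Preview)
Your setup contains two errors that derail the argument. First, in the normal form $Z=x^{k+1}\partial_x+y(1+\lambda x^{k})\partial_y$ the linear part at $0$ is $y\,\partial_y$, so the nonzero eigenvalue lies in the $y$-direction: the strong manifold is $\{x=0\}$ and the central (weak) manifold is $\{y=0\}$, the opposite of what you wrote. With the roles exchanged you are in effect trying to prove that every leaf accumulates on the \emph{central} manifold, which is false in general. Second, the Dulac form you invoke is only a \emph{formal} normal form; a holomorphic conjugacy to it typically does not exist globally near $0$, and this is exactly where the Hukuhara--Kimura--Matuda sectorial normalization enters. The paper's proof is nothing more than a citation of that theorem: once the foliation is sectorially conjugate to the model, the leaves are level sets of $y\,x^{-\lambda}\exp\!\bigl(1/(kx^{k})\bigr)$ and their accumulation on $\{x=0\}$ is read off directly.

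Even after fixing the identification, the real-flow monotonicity argument modeled on Lemma~\ref{silla} does not give what you want. Along the real flow of $Z$, pushing $x(t)\to 0$ forces (with the sign of $t$ chosen so that $|y|$ does not blow up) $|y(t)|\to 0$ as well, because $\dot y\approx y$; the orbit tends to the origin and never produces a limit point on $\{x=0\}\setminus\{0\}$. Your proposed bookkeeping with crossings of $\{|x|=a\}$ therefore cannot get started. To reach the strong manifold away from the origin one must move in \emph{complex} time (e.g.\ along purely imaginary $T$, which keeps $|y|$ essentially constant while $x\to 0$), or equivalently use the sectorial first integral above; either route is precisely the content of the HKM theorem, so the ingredient you tried to avoid is in fact indispensable.
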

\begin{proof} This an easy consequence of the sectorial normalization theorem of Hukuhara-Kimura-Matuda (\cite{HKM}).
We can find a simply statement of this theorem in \cite{Loray}.
\end{proof}

Clearly Proposition \ref{dynamical} follows by successive applications
of the following lemma, which is a direct consequence of Lemmas \ref{silla}
and \ref{silla nodo}.
\begin{lem}
Let $\mathcal{F}$ be a holomorphic foliation with a reduced singularity
at $0\in\mathbb{C}^{2}$ having two separatrices $S_{1}$ and $S_{2}$
 and such that one of the following conditions holds:
\begin{enumerate}
\item The singularity at $0\in\mathbb{C}^{2}$ has eigenvalue $\lambda\notin[0,\infty)$,
\item The singularity at $0\in\mathbb{C}^{2}$ is a saddle-node with $S_{2}$
as its strong manifold.
\end{enumerate}
Let $(x_{n})_{n\in\mathbb{N}}$ be a sequence of points outside the separatrices $S_1$, $S_2$ and
such that $x_{n}$ tends to a point in $S_{1}\backslash\{0\}$ as $n\rightarrow\infty$.
Then, for each $n\in\mathbb{N}$ there exists a point $y_{n}$ in
the leaf through $x_{n}$ such that $y_{n}$ tends to a point
in $S_{2}\backslash\{0\}$. \end{lem}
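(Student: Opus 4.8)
The plan is to treat the two cases separately: case~(2) is a one-line consequence of Lemma~\ref{silla nodo}, while case~(1) will be obtained by the argument of Lemma~\ref{silla}, run backwards in time, together with one extra estimate.

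For case~(2) I would argue as follows. Since $S_2$ is the strong manifold of the saddle-node, the other separatrix $S_1$ is an analytic invariant curve distinct from the strong manifold, hence it is the (convergent) central manifold. Take a neighborhood $U$ of $0$ as in Lemma~\ref{silla nodo}, with the representatives of $S_1$ and $S_2$ chosen inside $U$; then $x_n\in U$ for $n$ large. As $x_n$ lies off $S_1$, the leaf $L_n$ of $\mathcal F|_U$ through $x_n$ is not the central manifold, so by Lemma~\ref{silla nodo} the closure $\overline{L_n}$ contains the strong manifold $S_2$. Fixing once and for all a point $s_2\in S_2\setminus\{0\}$ in $U$, I can pick $y_n\in L_n$ with $\mathrm{dist}(y_n,s_2)<1/n$; then $y_n$ lies on the leaf through $x_n$ and $y_n\to s_2\in S_2\setminus\{0\}$.

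For case~(1) the plan is this. Choose holomorphic coordinates $(x,y)$ at $0$ with $S_1=\{y=0\}$ and $S_2=\{x=0\}$ (legitimate, as the two separatrices of a reduced singularity that is not a saddle-node are smooth and mutually transverse), and write a generator of $\mathcal F$ as $Z=xA(x,y)\frac{\partial}{\partial x}+yB(x,y)\frac{\partial}{\partial y}$. Because the eigenvalue $\lambda$ lies outside $[0,\infty)$, one can normalize $Z$ (multiplying by a suitable nonzero constant) so that $\mathrm{Re}\,A(0)>0>\mathrm{Re}\,B(0)$; on a small polydisc $U=\{|x|\le a,\ |y|\le b\}$ this gives constants with $0<c_1\le\mathrm{Re}\,A$ and $0<c_2\le-\mathrm{Re}\,B\le C_2$. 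This puts us in the setting of the proof of Lemma~\ref{silla}, with the two coordinates interchanged. Then I would run the real flow of $Z$ backwards from $x_n=(\xi_n,\eta_n)$ (which for $n$ large lies in $U$, with $\xi_n\to\xi\ne0$ and $\eta_n\to0$): along the backward orbit $|x|$ strictly decreases and $|y|$ strictly increases (the computation of Lemma~\ref{silla}), so the orbit stays in $\{|x|<a\}$ and must leave $U$ through $\{|y|=b\}$ at some time $s_n$; let $y_n$ be that exit point, which lies on the leaf through $x_n$. From $b^2\le|\eta_n|^2e^{2C_2s_n}$ and $\eta_n\to0$ one gets $s_n\to\infty$, hence $|x(y_n)|^2\le|\xi_n|^2e^{-2c_1s_n}\le a^2e^{-2c_1s_n}\to0$. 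Every accumulation point of $(y_n)$ then lies in $\{x=0,\ |y|=b\}\subset S_2\setminus\{0\}$, and passing to a subsequence --- or sliding each $y_n$ slightly inside its leaf in a flow box around such an accumulation point, exactly as one sharpens Lemma~\ref{silla} --- yields $y_n\to$ a point of $S_2\setminus\{0\}$.

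The step I expect to be the main obstacle is the one just flagged: Lemma~\ref{silla} cannot be quoted as a black box, because its hypothesis is that the base sequence converges to the singularity, whereas here $x_n$ converges to a point of $S_1$ away from $0$. The point to get right is that this causes no loss: the contraction of the $x$-coordinate to $0$ is produced not by $x_n$ starting near $\{x=0\}$ but by the length of time the backward orbit must spend in $U$ --- the time it takes $|y|$ to grow from the tiny value $|\eta_n|$ to $b$ --- and over that long time the factor $e^{-2c_1s_n}$ drives $|x|$ to $0$ regardless of its initial size. A secondary, essentially cosmetic point, already present in the statement of Lemma~\ref{silla}, is upgrading ``accumulates on $S_2\setminus\{0\}$'' to ``converges to a point of $S_2\setminus\{0\}$'', handled by the flow-box remark above.
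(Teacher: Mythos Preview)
Your proposal is correct and is precisely the natural elaboration of what the paper does: the paper's entire proof is the sentence ``which is a direct consequence of Lemmas~\ref{silla} and~\ref{silla nodo}'', and you have supplied the details behind that sentence. In particular, you correctly spotted that Lemma~\ref{silla} cannot be invoked as a literal black box in case~(1) (its hypothesis is $p_n\to 0$, not $x_n\to\xi\in S_1\setminus\{0\}$) and that the fix is the exit-time estimate $s_n\to\infty$ forcing the transverse coordinate to~$0$; this is exactly the computation implicit in the paper's ``direct consequence''.
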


\section{Proof of Theorems \ref{principal} and \ref{principal2}}

\subsection{Proof of Theorem \ref{principal2}} \label{section detallado}

Clearly Theorem \ref{principal2} is a corollary of the following theorem.
\begin{thm}
\label{principaDetallado}Let $\pi:M\rightarrow V$ be the resolution of
$\mathcal{F}$ at $p\in V$
and let $D=\pi^{-1}(0)$. Consider a connected component $D_{c}$
of
\[
D-\{\mbox{nodal singularities}\}-\{\mbox{dicritical components}\}
\]
 and set
\[
\widetilde{D_{c}}=\overline{D_{c}}-\{\mbox{nodal singularities}\}
\]
 Let $S_{1},...,S_{r}$, $r\ge1$ be representatives   of the strong separatrices%
\footnote{See remark \ref{strong separatrix}}.%
 issuing from $D_{c}$ and for each $j=1,...,r$ take a complex disc
$\Sigma_{j}$ passing transversely through a  point of $S_{j}\backslash\{p\}$.
Let $\mathcal{S}$ be a union of representatives of the weak separatrices issuing
from $D_{c}$. Then the set
\[
Sat_{\mathcal{F}}(\bigcup_{j=1}^{r}\Sigma_{j})\cup\mathcal{S}\cup\widetilde{D_{c}}
\]
is a neighborhood of $\widetilde{D_{c}}$ in $M$. In particular,
if $\mathcal{F}$ is a non-dicritical foliation without nodes in its
resolution, then
\[
Sat_{\mathcal{F}}(\bigcup_{j=1}^{r}\Sigma_{j})\cup\mathcal{S}
\]
 gives a punctured neighborhood of the singularity $p$.\end{thm}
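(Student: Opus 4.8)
The plan is to prove Theorem~\ref{principaDetallado} by a connectedness argument: show that the set
\[
\Omega \;=\; \operatorname{Sat}_{\mathcal F}\Big(\bigcup_{j=1}^r \Sigma_j\Big)\cup\mathcal S\cup\widetilde{D_c}
\]
contains a neighborhood of each point of $\widetilde{D_c}$, and then glue these local statements together. First I would stratify $\widetilde{D_c}$ into its finitely many pieces: the regular points of each component $P_j\subset\overline{D_c}$, and the corners $P_i\cap P_j$ lying in $\widetilde{D_c}$ (which, by definition of $D_c$, are never nodal, and if they are saddle-nodes the strong manifold lies along one of the two components — this is exactly the ordering used in the proof of Theorem~\ref{existencia de cadenas}). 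The claim to establish is that $N_c := \{\,z\in\widetilde{D_c} : \Omega \text{ contains a neighborhood of } z\text{ in }M\,\}$ is nonempty, open, and closed in $\widetilde{D_c}$; since $\widetilde{D_c}$ is connected (removing finitely many nodal points from a connected $\overline{D_c}$ keeps it connected, as in the Strong Separatrix argument) this forces $N_c=\widetilde{D_c}$, and a neighborhood of $\widetilde{D_c}$ is then the union.

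The heart of the argument is the openness and the non-emptiness, and both come from Proposition~\ref{dynamical} together with a flow-box/holonomy analysis along $D_c$. For non-emptiness: by Theorem~\ref{existencia de cadenas} every invariant component $P$ of $D_c$ starts an approximation chain ending at a singularity $q$, not a corner, with $\operatorname{Re}\operatorname{CS}(P_{j_n},q)<0$; the transverse separatrix $S$ through $q$ is one of the strong separatrices $S_i$ issuing from $D_c$ (this is precisely Remark~\ref{strong separatrix}(1)--(2) matched against item (4) of the approximation-chain definition). Proposition~\ref{dynamical} then says: any sequence $x_n$ outside $D$ converging to a regular point of $P$ has points $y_n$ in the same leaf converging into $S\setminus\{p\}$, i.e. into some $\Sigma_i$ after shrinking. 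Reading this the other way around — saturating $\Sigma_i$ — the saturation $\operatorname{Sat}_{\mathcal F}(\Sigma_i)$ accumulates on (a full neighborhood in $M$ of) a regular point of $P$. Here I would be careful to run the argument so as to actually fill an open set in $M$, not just accumulate: the leaves near a regular point of $P$ form a trivial fibration over a transverse disc, and Proposition~\ref{dynamical} controls how these leaves enter the $\Sigma_i$, so the saturation contains a punctured transverse disc times an arc of $P$; adding $\widetilde{D_c}$ itself closes the puncture along $P$.

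For openness of $N_c$: if $\Omega$ contains a neighborhood of $z_0\in\widetilde{D_c}$ and $z$ is a nearby point of $\widetilde{D_c}$, I propagate the neighborhood along $D_c$ using the local models at the singularities. Near a regular point of a component this is immediate (flow box). Near a non-nodal corner with eigenvalue $\lambda\notin[0,\infty)$, Lemma~\ref{silla} moves leaves from near one separatrix-branch of the corner to near the other, so a neighborhood on one side spreads to a neighborhood on the other. Near a saddle-node corner in $\widetilde{D_c}$, Lemma~\ref{silla nodo} does the same from the strong manifold side; the central manifold, if it is a separatrix contributing, is accounted for in $\mathcal S$ (weak separatrix) or gives another strong separatrix depending on the Camacho--Sad index. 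One also has to track the separatrices that are transverse to $D_c$ at its singular points but leave toward the removed (nodal) part of $D$ or toward dicritical components — these are exactly what produce the terms $\bigcup\Sigma_j$, $\mathcal S$; a bookkeeping lemma says every leaf passing near $\widetilde{D_c}$ either stays near $\widetilde{D_c}$, or escapes through one of these finitely many exits, hence lands in $\Omega$. The main obstacle I anticipate is this last bookkeeping: making rigorous, uniformly along $\widetilde{D_c}$, the dichotomy "a leaf germ near $\widetilde{D_c}$ is either trapped or escapes through a catalogued separatrix", and checking that $\mathcal S$ (weak separatrices) plus $\widetilde{D_c}$ plus the strong-separatrix saturations exhaust all the escape routes — in particular that no leaf can spiral indefinitely inside $\widetilde{D_c}$ without ever meeting a $\Sigma_j$ or accumulating on $\widetilde{D_c}$. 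Once the trapped/escaping dichotomy and the three local transfer lemmas are in hand, openness, closedness (a limit of points with $\Omega$-neighborhoods lies in the interior by the same local models), and non-emptiness assemble into the theorem; the final "punctured neighborhood of $p$" statement then follows because in the non-dicritical node-free case $\widetilde{D_c}=D=\pi^{-1}(p)$ and $\pi$ collapses $D$ to $p$, so a neighborhood of $D$ minus $D$ maps onto a punctured neighborhood of $p$, with $\mathcal S$ and the $\widetilde{D_c}$ term disappearing under $\pi$.
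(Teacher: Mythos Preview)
Your proposal uses exactly the right ingredients --- Theorem~\ref{existencia de cadenas}, Proposition~\ref{dynamical}, and Lemmas~\ref{silla} and~\ref{silla nodo} --- but packages them in a connectedness/propagation scheme that is more elaborate than necessary and leaves the hardest step (closedness of your $N_c$) essentially unargued. The paper's proof is shorter and more direct: excise small balls $B_1,\dots,B_l$ around the nodal singularities so that $K=\widetilde{D_c}\setminus(B_1\cup\cdots\cup B_l)$ is compact, and argue by contradiction. If $\Omega$ is not a neighborhood of $K$, pick a sequence $x_n\notin\Omega$ with $x_n\to\zeta\in K$, and do a single case analysis on $\zeta$: if $\zeta$ is a regular point of some invariant component, Theorem~\ref{existencia de cadenas} and Proposition~\ref{dynamical} push the leaf of $x_n$ close to a strong separatrix, hence into some $\Sigma_j$; if $\zeta$ is a non-nodal singularity one application of Lemma~\ref{silla} or Lemma~\ref{silla nodo} produces points $y_n$ (on the same leaves) converging to a regular point of $D$, reducing to the previous case; if $\zeta$ is a saddle-node with central manifold in $D$, its strong manifold is already one of the $S_j$ and Lemma~\ref{silla nodo} finishes directly. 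Either way $x_n\in\Omega$ for large $n$, a contradiction.

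Two remarks on where your scheme is less efficient. First, openness of $N_c$ is automatic (it is $\operatorname{int}(\Omega)\cap\widetilde{D_c}$), so the substantive content is entirely in closedness --- and your one-line justification for closedness is precisely the paper's whole argument, applied pointwise; once you write it out you will see that it already proves $N_c=\widetilde{D_c}$ with no appeal to connectedness. Second, your ``propagation across corners'' idea runs into an asymmetry at saddle-node corners: Lemma~\ref{silla nodo} only pushes leaves \emph{toward} the strong manifold, so you cannot freely propagate a neighborhood in both directions along $\widetilde{D_c}$. The paper sidesteps this by never propagating along $D$: at each $\zeta$ it invokes the full approximation chain starting from the component through $\zeta$, which already has the saddle-nodes oriented correctly. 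Your anticipated ``trapped versus escaping'' bookkeeping dissolves for the same reason --- every leaf near $\widetilde{D_c}$ is pushed along an approximation chain to a strong separatrix in finitely many steps, so there is no indefinite spiraling to rule out.
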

\begin{proof}
Let $B_{1},...,B_{l}$ be small open balls centered at the nodal singularities
in $D$ and set
\[
K=\widetilde{D_{c}}-(B_{1}\cup...\cup B_{l})
\]
\[
\Omega=Sat_{\mathcal{F}}(\bigcup_{j=1}^{r}\Sigma_{j})\cup\mathcal{S}\cup\widetilde{D_{c}}.
\]
Clearly $K$ is compact. It is easy to see that, if the balls $B_{i}$
are small enough and $\Omega$ contains a neighborhood of $K$, then
$\Omega$ contains a neighborhood of $\widetilde{D_{c}}$. Thus, suppose
by contradiction that $\Omega$ contains no neighborhood of $K$.
Then we find a sequence $(x_{n})_{n\in\mathbb{N}}$ with $x_{n}\rightarrow\zeta\in K$
and such that $x_{n}\notin\Omega$ for all $n\in\mathbb{N}$. Then
\begin{enumerate}
\item $\zeta$ is a regular point contained in some non-dicritical component,
or
\item $\zeta$ is a non-nodal singularity in $D_{c}$.
\end{enumerate}
In view of Theorem \ref{existencia de cadenas} and Proposition \ref{dynamical},
in case (1) above we deduce that for $n$ big enough the leaf through
$x_{n}$ is arbitrarily close to a strong separatrix, which is contradiction.
Suppose that $\zeta$ is a singularity with eigenvalue $\lambda\notin[0,\infty)$
or a saddle-node with strong manifold contained in $D$. Since $x_n\notin\Omega$ the points
$x_n$ are outside the separatrices $\{S_j:j=1,...,r\}\cup\mathcal{S}$ issuing from $D_c$. Then we use Lemmas
\ref{silla} and \ref{silla nodo} to obtain a sequence $(y_{n})_{n\in\mathbb{N}}$,
with $y_{n}$ contained in the leaf through $x_{n}$ and such that
$y_{n}$ tends to a regular point in $D$, therefore we are again
in case (1). Finally, suppose $\zeta$ is a saddle-node with central
manifold contained in $D$. Then the strong manifold of $\zeta$ is
a strong separatrix and Lemma \ref{silla nodo} gives a contradiction.
\end{proof}

\subsection{Proof of Theorem \ref{principal}}

In fact, we will show the following alternative for $\mathcal I$:
\begin{enumerate}

\item[(i)] either there is a neighborhood $U$ of $p\in V$ such that
the set $\mathcal I\cap U$ is a union of a collection of nodal separators
with a collection of representatives of weak separatrices, or
\item[(ii)] $\mathcal I$ contains a strong separatrix.
\end{enumerate}

  Let $\mathcal{N}$ and $\mathcal{D}$ be respectively the set of nodal singularities in $D$ and the set of dicritical components of $D$. Take open sets $B_j$, $j\in\mathcal{N}$ and $T_i$, $i\in\mathcal{D}$, with the following properties:
 \begin{enumerate}
 \item in linearizing coordinates for the node $j\in\mathcal{N}$, we have that $B_j$ is a ball centered at $j$
 \item each $T_i$ is a tubular neighborhood of the dicritical component $i\in\mathcal{D}$
 \item each restriction $\mathcal{F}|_{T_i}$ is a fibration by discs
 \item all the sets $B_j$ and $T_i$ are pairwise disjoint.
 \end{enumerate}
 Set
\[
D^{*}=D-\mathcal{N}-\bigcup_{i\in\mathcal{D}}i,
\]
\[
\widetilde{D}=\overline{D^{*}} -\mathcal{N}.
\]
 Let $\mathcal{S}$ be a union of representatives of the weak separatrices issuing from
$D^{*}$ and let $S_{1},...,S_{r}$ be representatives of the strong separatrices issuing
from $D^{*}$.
Suppose item (ii) of the alternative above does not hold. Then for each $j=1,...,r$ we can take a complex disc $\Sigma_{j}$
disjoint of $\mathcal I$ and passing transversely through a
point of $S_{j}\backslash\{p\}$. Then, if ${\mathcal I}^*$  is the strict transform of $\mathcal I\backslash\{p\}$ in the resolution of $\mathcal{F}$, we have that
\begin{equation}\label{eq1}{\mathcal I}^* \mbox{ is disjoint of }
\Omega:=Sat_{\mathcal{F}}(\bigcup_{j=1}^{r}\Sigma_{j}).
\end{equation}
By applying Theorem \ref{principaDetallado} to each connected component
of $D^{*}$ we obtain that the set
\[
\Omega\cup\mathcal{S}\cup \widetilde{D}
\]
 is a neighborhood of $\widetilde{D}$.

Then \[U:=\Omega\cup\mathcal{S}\cup \widetilde{D}\cup\bigcup_{j\in\mathcal{N}}{B_j}\cup\bigcup_{i\in\mathcal{D}}T_i
\]
is a neighborhood of  the exceptional divisor $D$ and by \ref{eq1} above we deduce that ${\mathcal I}^*$ is contained in
$$\mathcal{S}\cup\bigcup_{j\in\mathcal{N}}{B_j}\cup\bigcup_{i\in\mathcal{D}}(T_i-\Omega).$$
It is easy to see that the intersection  $\overline{{\mathcal I}^*}\cap (T_i-\Omega)$ is a union of representatives of weak separatrices\footnote{Each such one weak separatrix is a fiber of the fibration $\mathcal{F}|_{T_i}$}.
On the other hand, since the intersection ${\mathcal I}^*\cap B_j$ is an invariant set of the node $\mathcal{F}|_{B_j}$, we deduce that
\begin{enumerate}
 \item $\overline{{\mathcal I}^*}\cap B_j$ is a union of nodal separators, or
 \item  $\overline{{\mathcal I}^*}\cap B_j$ is the union of a representative of a weak separatrix with a collection  of nodal separators\footnote{This case can happen if the node $j$ is not a corner}.
 \end{enumerate}
 Therefore $\overline{{\mathcal I}^*}\cap U$ is a union of a collection of nodal separators with  a collection of representatives of weak separatrices, that is, item (i) of the alternative above holds.

 \section{Local minimal sets}

 In this section we give an interpretation of Theorem \ref{principal} in terms of minimal sets. Let $\mathcal{F}$ be a holomorphic foliation
 on a  complex surface $V$ with a singularity at $p\in V$.
 \begin{defn} \label{minimal set}Let $\mathcal{M}$ be a connected closed subset of $V$. We say that $\mathcal{M}$ is a local minimal set at $p$ if there is a neighborhood $U$ of $p$ in $V$ such that the following conditions hold:
 \begin{enumerate}
 \item $p$ is the only singular point of $\mathcal{F}|_U$
 \item  $\{p\}\subsetneq\mathcal{M}$ and  $\mathcal{M}$  is invariant by $\mathcal{F}|_U$
 \item If $\mathcal{M}'\subset\mathcal{M}$ is a connected closed subset of a neighborhood $U'\subset U$ of $p$  with  $\{p\}\subsetneq\mathcal{M}'$ and  $\mathcal{M}'$  invariant by $\mathcal{F}|_{U'}$, then $\mathcal{M}'$ is the connected component of $\mathcal{M}\cap U'$ containing $p$.
 \end{enumerate}
 \end{defn}

\begin{thm}If $p\in V$ is a normal singularity and $\mathcal{M}$ is a local minimal set at $p$, then there exists a neighborhood $\Omega$ of $p$ in $V$ such that
\begin{enumerate}
\item $\mathcal{M}\cap \Omega$ is a representative of a separatrix at $p$, or
\item $\mathcal{M}\cap \Omega$ is a nodal separator at $p$.
\end{enumerate}
\end{thm}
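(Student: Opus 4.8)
The plan is to deduce this from Theorem \ref{principal} together with the minimality clause (3) of Definition \ref{minimal set}. Since $\mathcal{M}$ is closed, invariant under $\mathcal{F}|_U$ and $\{p\}\subsetneq\mathcal{M}$, Theorem \ref{principal} — in the sharp form (i)/(ii) established in its proof — applies to $\mathcal{M}$: either (i) there is a neighborhood $W$ of $p$ such that $\mathcal{M}\cap W$ is a finite union of nodal separators and representatives of weak separatrices, or (ii) $\mathcal{M}$ contains a representative $S$ of a strong separatrix. In both cases I would produce a connected, $\mathcal{F}|_{U'}$-invariant, relatively closed subset $P$ of a neighborhood $U'$ of $p$, with $\{p\}\subsetneq P\subseteq\mathcal{M}$, which is a representative of a separatrix or a nodal separator at $p$; by clause (3), $P$ is then forced to be the connected component of $\mathcal{M}\cap U'$ containing $p$.

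Case (i) is disposed of at once. Every piece occurring in $\mathcal{M}\cap W$ is connected and contains $p$, so $\mathcal{M}\cap W$ is connected, and so is $\mathcal{M}\cap U'$ for every neighborhood $U'\subseteq W$ chosen small enough — adapted to the linearizing balls of the nodes involved — that each piece intersected with $U'$ remains a connected, relatively closed, $\mathcal{F}|_{U'}$-invariant subset of $U'$ properly containing $p$. Let $P$ be one of these pieces. Applying clause (3) with $\mathcal{M}'=P\cap U'$, the set $P\cap U'$ is the connected component of $\mathcal{M}\cap U'$ through $p$; but that component is all of $\mathcal{M}\cap U'$. Hence $\mathcal{M}\cap U'$ is a single nodal separator or a single representative of a weak separatrix, and after one last shrinking to a neighborhood $\Omega$ adapted to the local form of $P$ we get alternative (1) or (2).

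In case (ii) put $P=S$, chosen small enough to be relatively closed in a neighborhood $U'$ of $p$; clause (3) with $\mathcal{M}'=S\cap U'$ gives that the component $C$ of $\mathcal{M}\cap U'$ through $p$ equals $S\cap U'$. Since $S\cap U'$ is relatively closed in $U'$, it suffices to show that $C$ is relatively \emph{open} in $\mathcal{M}\cap U'$: then $C$ and $(\mathcal{M}\cap U')\setminus C$ are disjoint sets closed in $U'$, $p$ lies only in the first, and any ball $\Omega\subseteq U'$ around $p$ disjoint from the closed set $(\mathcal{M}\cap U')\setminus C$ satisfies $\mathcal{M}\cap\Omega=S\cap\Omega$, a (smaller) representative of the separatrix. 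Suppose $C$ is not relatively open; since $C=S\cap U'$ is a curve, using the holonomy of $S$ to push accumulation points of $\mathcal{M}\setminus S$ along $S$ toward $p$, one reduces to the case that there is a sequence $x_n\in(\mathcal{M}\cap U')\setminus S$ with $x_n\to p$. Let $L_n$ be the leaf of $\mathcal{F}|_U$ through $x_n$; then $\overline{L_n}\subseteq\mathcal{M}$, and after passing to a subsequence the compact connected sets $\overline{L_n}$ converge in the Hausdorff metric to a compact connected $\mathcal{F}|_U$-invariant set $K\subseteq\mathcal{M}$ with $p\in K\ne\{p\}$. A boundary bumping argument plus clause (3), applied to the component of $K\cap U''$ through $p$ and to any separatrix or nodal separator that Theorem \ref{principal} provides inside $K$, yields $S\subseteq K$, i.e. the $L_n$ asymptotically surround $S$ near $p$. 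Running on the $L_n$ the dynamical analysis behind Theorems \ref{existencia de cadenas} and \ref{principaDetallado} — following each $L_n$ through the corners of the resolution by Lemmas \ref{silla} and \ref{silla nodo} — forces the $L_n$ (hence the $x_n$, via points on the same leaf) into the saturation of a transversal to a strong separatrix $S'$; closedness and invariance of $\mathcal{M}$ then place a representative of $S'$ inside $\mathcal{M}$, hence by clause (3) inside $C=S\cap U'$, so $S'=S$; tracing back, the $x_n$ lie in $C$, a contradiction.

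The main obstacle is exactly case (ii): Theorem \ref{principal} only locates a separatrix (or nodal separator) inside $\mathcal{M}$, and clause (3) only identifies it with the $p$-component of $\mathcal{M}$, so the real work is showing that $\mathcal{M}$ has no further local component accumulating at $p$. This requires re-examining the mechanism of Theorems \ref{existencia de cadenas} and \ref{principaDetallado} under the extra constraint, coming from minimality, that $S$ is the only separatrix of $\mathcal{F}$ available inside $\mathcal{M}$ near $p$; the delicate point within this is to exclude leaves of $\mathcal{M}$ that spiral onto $S$ near $p$. The remaining ingredients — the topological separation of $C$ from the rest of $\mathcal{M}\cap U'$, the reduction to limit points equal to $p$, the Hausdorff-limit compactness step, and the final adjustments of $\Omega$ and of the representative — are routine.
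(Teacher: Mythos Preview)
Your overall plan---produce a separatrix or nodal separator $S\subset\mathcal{M}$ via Theorem~\ref{principal} and then invoke clause~(3) of Definition~\ref{minimal set}---is precisely the paper's, but the paper carries it out far more directly. It does not use the refined dichotomy (i)/(ii) from the proof of Theorem~\ref{principal}, nor any Hausdorff limits, nor any re-running of the approximation-chain dynamics. Having identified $S$ with the $p$-component of $\mathcal{M}\cap U'$ by clause~(3), the paper simply asserts that $(\mathcal{M}\cap U')\setminus S$ is closed in $U'$ and does not contain $p$, whence any small enough $\Omega$ satisfies $\mathcal{M}\cap\Omega\subset S$; the same two lines are then repeated verbatim with a nodal separator in place of $S$ when $\mathcal{M}$ contains no separatrix. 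That is the entire proof.

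Your case~(ii), by contrast, is both overcomplicated and incomplete. The Hausdorff-limit step is unjustified: the leaf closures $\overline{L_n}$ need not be compact (they may reach $\partial U$), and even if they were, nothing you wrote prevents the limit $K$ from collapsing to $\{p\}$. More seriously, the final contradiction never arrives. You correctly argue that the dynamics push points of each $L_n$ toward a strong separatrix $S'$, that therefore $S'\subset\mathcal{M}$ by closedness, and hence $S'=S$ by clause~(3). But then ``tracing back, the $x_n$ lie in $C$'' is a non sequitur: the fact that the leaf $L_n$ meets a transversal to $S$ in no way places $x_n$ itself on $S$, and indeed the $x_n$ were chosen off $S$ from the outset. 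So the loop does not close, and the detour through Theorems~\ref{existencia de cadenas} and~\ref{principaDetallado} is neither needed nor sufficient. The issue you correctly isolate---passing from ``$S$ is the $p$-component of $\mathcal{M}\cap U'$'' to ``$\mathcal{M}\cap\Omega=S\cap\Omega$ for some $\Omega$''---is handled in the paper by the one-line topological observation above, not by dynamics.
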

\begin{proof} Let $U$ be the neighborhood of $p$ in $V$ given in Definition \ref{minimal set}. Suppose that $\mathcal{M}$ contains a representative $S$ of a separatrix at $p$. Take a neighborhood $U'\subset U$ of $p$ such that $S$ is closed in $U'$. Then by item (3) of Definition \ref{minimal set} we have that $S$ is the connected component of $\mathcal{M}\cap U'$ containing $p$. Then $(\mathcal{M}\cap U')\backslash S$ is closed in $U�$ and it does not contain the point $p$. Thus there is a neighborhood $\Omega\subset U'$ of $p$ such that $\mathcal{M}\cap\Omega$ is contained in $S$. Clearly we can take $\Omega$ such that $\mathcal{M}\cap\Omega$ is a representative of a separatrix. If $\mathcal{M}$ contains no representative of a separatrix, then by Theorem \ref{principal} we have that  $\mathcal{M}$ contains a nodal separator $S_\mathcal{N}$ at $p$. By repeating the arguments above with $S_\mathcal{N}$ instead of $S$ we obtain a neighborhood $\Omega$ of $p$ such that $\mathcal{M}\cap \Omega$ is a nodal separator at $p$.

\end{proof}

\begin{rem}Let  $\mathcal{M}$ and $\mathcal{M}'$ be local minimal sets at $p$. If there is a neighborhood $\Omega$ of $p$ such that $\mathcal{M}\cap\Omega=\mathcal{M}'\cap\Omega$, then we can think that  $\mathcal{M}$ and $\mathcal{M}'$ are "essentially" the same local minimal set at $p$. In other words, we can define the notion of germ of local minimal set at $p$. In this case the theorem above asserts that the separatrices and the nodal separators are the only germs of local minimal sets at $p$.
\end{rem}

\end{document}